\documentclass[11pt]{article}
\usepackage[T1]{fontenc}
\usepackage{lmodern}
\usepackage[margin=1in]{geometry}
\usepackage{amsmath,amssymb,amsthm}
\usepackage{booktabs,microtype,titlesec}
\usepackage{etoolbox,placeins}
\titleformat{\section}{\large\normalfont}{\thesection.}{0.5em}{}
\newtheoremstyle{narrative}{6pt}{6pt}{\normalfont}{}{\normalfont}{.}{ }{}
\theoremstyle{narrative}
\patchcmd{\abstract}{\bfseries}{\normalfont}{}{}
\usepackage{tikz}
\usetikzlibrary{arrows.meta}
\usepackage[hidelinks]{hyperref}
\hypersetup{
  pdftitle={Edge complexity of weighted graphs: involutory symmetries and NP-hardness},
  pdfauthor={Vishal Gupta and Alex Iosevich},
  pdfsubject={Fourier ratios, weighted graphs, and computational complexity}
}
\newtheorem{theorem}{Theorem}[section]
\newtheorem{proposition}[theorem]{Proposition}
\newtheorem{lemma}[theorem]{Lemma}
\newtheorem{corollary}[theorem]{Corollary}
\newtheorem{definition}[theorem]{Definition}
\newtheorem{example}[theorem]{Example}
\newtheorem{question}[theorem]{Question}
\newtheorem{remark}[theorem]{Remark}

\title{Edge complexity of weighted graphs: involutory symmetries and NP-hardness}

\author{Vishal Gupta\thanks{Department of Mathematics, University of Rochester, Rochester, New York 14627, USA. Email address: \nolinkurl{vishalgupta@rochester.edu}.} \and Alex Iosevich\thanks{Department of Mathematics, University of Rochester, Rochester, New York 14627, USA. Email address: \nolinkurl{alex.iosevich@rochester.edu}. The second listed author was supported in part by NSF grant DMS-2506858.}}

\date{}

\begin{document}
\maketitle

\begin{abstract}
The edge complexity of a weighted graph is the smallest ratio of the Fourier $\ell^1$ and $\ell^2$ norms of its adjacency matrix over all vertex labelings. We study the difficulty of finding this minimum by relating it to a graph symmetry. Adding a universal vertex with sufficiently large incident weight produces an explicit Fourier $\ell^1$ lower bound. We show that equality holds exactly when the source graph has a fixed-point-free involutory automorphism. Two-sided estimates compare the excess above this bound with the squared Frobenius distance to the nearest weighted graph having such a symmetry. For sources of constant weighted degree, these estimates determine the exact leading term as the added weight tends to infinity. A stronger separation for simple source graphs proves that additive $\frac{1}{256N^{\frac{7}{2}}}$ approximation of weighted edge complexity is NP-hard, even on connected graphs of odd order $N$ with at most two distinct positive integer weights, each at most $N^2$. We also prove that recognizing a simple graph with a real Fourier labeling is NP-complete. A seven-vertex example shows that every minimizing labeling can have nonreal Fourier coefficients even when real Fourier labelings exist. An exact rational certificate for this example is included in the appendix.
\end{abstract}

\noindent Keywords. Fourier ratio, weighted graph, vertex labeling, graph automorphism, involution, NP-hardness.

\noindent 2020 Mathematics Subject Classification. 05C50, 68Q17, 42A38.

\section{Introduction}
A graph does not come with a preferred ordering of its vertices. Once we choose an ordering, however, its adjacency matrix becomes a function on $\mathbb{Z}_N\times\mathbb{Z}_N$, where $N$ is the number of vertices. We can then ask whether its Fourier transform is concentrated on a small collection of frequencies. Different orderings can give very different answers. The natural question is therefore to find the ordering for which the Fourier transform is least complicated.

One way to make this question precise is to divide the sum of the absolute values of the Fourier coefficients by their Euclidean norm. This is the Fourier ratio. The denominator is unchanged by a relabeling, so minimizing the ratio amounts to minimizing the numerator. Gupta and Iosevich \cite{GI} introduced the resulting graph invariant, called edge complexity, and related it to graph energy and recovery from partial information. That work also treats weighted and directed kernels and perturbation estimates. Further results on equality in the energy bound, graph products, and random graphs appear in \cite{GIIST}. A different approach to edge complexity, based on additive energy of
the labeled edge set, is developed in forthcoming work \cite{additive-energy}. For the broader Fourier-ratio framework, see \cite{Aldaleh,Burstein}.

The question we address is whether this minimum can be found efficiently. For a specified labeling, the Fourier ratio can be evaluated to any prescribed accuracy in polynomial time. The difficulty is the choice of labeling. We prove that approximating the minimum to inverse-polynomial additive accuracy is NP-hard, even for a restricted family of connected graphs with nonnegative integer weights.

In a network, an edge weight may record a capacity, a similarity, or the number of interactions between two vertices; see Newman \cite{Newman}. Here the weights also let us arrange a useful separation of scales. We add one vertex, join it to all the old vertices, and give the new edges a common large weight. We place the added vertex at label zero. The resulting star determines the sign of the real part of every Fourier coefficient. The old graph supplies a smaller contribution, which can change the imaginary parts but cannot change those signs.

The connection with graph symmetry comes from a simple equality condition. For a complex number $z$, the inequality $|z|\ge|\operatorname{Re}z|$ is an equality precisely when $z$ is real. In our construction, the sum of the absolute values of the real parts is independent of the labeling. Equality in the resulting lower bound therefore asks that every Fourier coefficient be real. Fourier conjugation translates this last condition into invariance under the reflection $x\mapsto-x$. Since the new graph has odd order, reflection fixes the added vertex and pairs all the old vertices. This is the symmetry that the construction detects.

Let $H$ be a nonnegative weighted graph of even order $n\ge4$, and let $s$ be the sum of its edge weights. Add a vertex adjacent to every vertex of $H$, assigning weight $M>s$ to each new edge.
The resulting weighted cone is denoted by $W_M(H)$, and its order is $N=n+1$. Theorem~\ref{thm:cone} proves that its minimum Fourier $\ell^1$ norm is at least
$$
T=\frac{4Mn^2-4s}{N}.
$$
Equality holds precisely when $H$ has an automorphism consisting entirely
of transpositions. Such an automorphism is called a fixed-point-free involution. 
The equality statement has a quantitative counterpart. If $B_H$ is the weighted adjacency matrix of $H$, define
$$
a(B_H)=\min_\tau\frac{1}{4}
\|B_H-P_\tau B_HP_\tau\|_2^2,
$$
where the minimum is over all fixed-point-free involutions of the vertex set and $P_\tau$ is the corresponding permutation matrix. Here and below, the matrix $\ell^2$ norm is the Frobenius norm. This quantity is the squared distance to the nearest nonnegative weighted adjacency matrix having the required symmetry. Theorem~\ref{thm:stability} bounds the excess above $T$ from both sides in terms of $a(B_H)$. If all weighted degrees of $H$ are equal, Theorem~\ref{thm:regular} gives the more precise conclusion
$$
\lim_{M\to\infty}M\bigl(\Phi(W_M(H))-T\bigr)
=\frac{N}{4}a(B_H),
$$
where $\Phi$ denotes the minimum Fourier $\ell^1$ norm.

For a simple source graph, $a(B_H)$ has a direct combinatorial meaning: it is the smallest number of edge additions or deletions required to obtain a graph with a fixed-point-free involutory automorphism. If this number is nonzero, Theorem~\ref{thm:simple-gap} proves the uniform separation
$$
\Phi(W_M(H))\ge T+\frac{N}{16M}.
$$
This estimate supplies the gap needed for a complexity reduction. Taking $M=|E(H)|+1$, we obtain NP-hardness of approximating weighted edge complexity to additive error $\frac{1}{256N^{\frac{7}{2}}}$. The constructed graphs are connected, have odd order, and use only the positive weights $1$ and $M$, with $M\le N^2$.

The distinction between the analytic and computational statements is useful. The analytic results hold for nonnegative real weights. The computational results concern rational weights represented in binary, and the reduction uses only integer weights. This distinction matters as unequal real weights can be arbitrarily close, so failure of symmetry need not produce a uniform gap without an additional separation assumption.

For simple graphs, we prove that deciding whether a labeling with a real Fourier matrix exists is NP-complete, even for connected graphs of odd order with a unique universal vertex. The complexity of computing $\operatorname{FR}_{\min}$ for simple graphs remains unresolved by our argument. The large weight ensures the sign condition used in the equality characterization for the cone, and Example~\ref{ex:sign-failure} shows that this condition can fail when the new edges have weight one. Moreover, we give a seven-vertex graph admitting real Fourier labelings for which every minimizing labeling has a nonreal Fourier matrix. Thus recognizing a real Fourier labeling and finding a minimizing labeling are different questions.

We begin with the normalization and elementary bounds, then prove the cone theorem and its quantitative versions. The complexity reduction uses only these estimates and the NP-completeness of detecting a fixed-point-free involution. We conclude by examining what the argument says about simple graphs and by giving an exact certificate for the seven-vertex example.

\section{Weighted edge complexity}

We first fix the normalization and separate the elementary properties of the invariant from the symmetry argument.

\begin{samepage}
\begin{definition}\label{def:weighted}
A nonnegative weighted graph is a pair $G=(V,w)$, where $V$ is a finite set and $w:\binom{V}{2}\to[0,\infty)$. Its weighted adjacency matrix $W=(w_{uv})$ is symmetric and has zero diagonal. Its support graph has edge set
$$
E_w=\{\{u,v\}\in\binom{V}{2}:w_{uv}>0\}.
$$
Let
$$
s_w=\sum_{\{u,v\}\in\binom{V}{2}}w_{uv}
 \text{ and } 
t_w=\sum_{\{u,v\}\in\binom{V}{2}}w_{uv}^2.
$$
\end{definition}
\end{samepage}

Let $N=|V|$. A labeling is a bijection $\pi:\mathbb{Z}_N\to V$. It turns
the weighted adjacency matrix into the function
$$
f_\pi(x,y)=w_{\pi(x)\pi(y)}.
$$
We also denote this labeled matrix by $W_\pi$. The same permutation acts
on the rows and columns, because both indices refer to the same vertex
set. Define
\begin{equation}\label{eq:fourier}
\widehat f_\pi(r,v)=\frac{1}{N}
\sum_{x,y\in\mathbb{Z}_N}f_\pi(x,y)
e^{-\frac{2\pi i(rx+vy)}{N}}.
\end{equation}
The factor $\frac{1}{N}$ makes this transform unitary on the $N^2$
ordered pairs. If
$$
F_N(r,x)=\frac{1}{\sqrt{N}}e^{-\frac{2\pi irx}{N}},
$$
then $\widehat f_\pi=F_NW_\pi F_N^{\mathsf T}=F_NW_\pi F_N$, since the Fourier matrix is symmetric. Both variables in \eqref{eq:fourier} carry the same negative sign. This accounts for the factor $F_N$ on the right; conjugating that factor would give a different transform. 

For a matrix $C$,
$$
\|C\|_1=\sum_{r,v}|C_{rv}|
 \text{ and } 
\|C\|_2=\left(\sum_{r,v}|C_{rv}|^2\right)^{\frac{1}{2}}.
$$
The second is also called the Frobenius norm.
Each undirected edge appears twice in the adjacency matrix, so Parseval's
identity gives
\begin{equation}\label{eq:parseval}
\|\widehat f_\pi\|_2=\|W_\pi\|_2=\|W\|_2=\sqrt{2t_w}.
\end{equation}

\begin{samepage}
\begin{definition}\label{def:fr}
For a nonzero weighted adjacency matrix $W$, set
$$
\Phi(W)=\min_\pi\|F_NW_\pi F_N\|_1.
$$
The edge complexity of the corresponding weighted graph is
\begin{equation}\label{eq:frmin}
\operatorname{FR}_{\min}(W)
=\min_\pi\frac{\|F_NW_\pi F_N\|_1}{\|F_NW_\pi F_N\|_2}
=\frac{\Phi(W)}{\|W\|_2}.
\end{equation}
We also write $\operatorname{FR}_{\min}(G)$. 
\end{definition}
\end{samepage}

For a simple graph, the weights belong to $\{0,1\}$. Thus $f_\pi$ is
exactly the indicator of the ordered edge relation, $t_w=|E(G)|$, and
\eqref{eq:frmin} becomes the edge complexity defined in \cite{GI}. The
weighted definition keeps the same Fourier transform and the same minimization over labelings. To see why the ratio measures concentration, suppose a Fourier matrix has $k$ nonzero entries. Cauchy--Schwarz bounds its Fourier ratio by $\sqrt{k}$, with equality when those entries have equal absolute value. The ratio therefore records both the number of frequencies present and how their magnitudes are distributed.

\begin{samepage}
\begin{proposition}\label{prop:basic}
Let $W$ be a nonzero nonnegative weighted adjacency matrix of order $N$.
For every $c>0$,
$$
\operatorname{FR}_{\min}(cW)=\operatorname{FR}_{\min}(W).
$$
For a fixed labeling, an affine change of labels $x\mapsto ax+b$, with
$a$ a unit in $\mathbb{Z}_N$ and $b\in\mathbb{Z}_N$, preserves the Fourier
ratio. If
$$
\mathcal E(W)=\sum_{j=1}^N|\lambda_j(W)|
$$
is the matrix energy, then
\begin{equation}\label{eq:energy}
\sqrt{2}\le\frac{\mathcal E(W)}{\|W\|_2}
\le\operatorname{FR}_{\min}(W)\le N.
\end{equation}
Equality in the middle inequality holds whenever $W$ has a circulant
labeling, meaning that its entries depend only on the difference of the
two labels.
\end{proposition}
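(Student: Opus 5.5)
The plan is to treat the four claims separately, since each reduces to a short computation with the unitary matrix $F_N$.

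\emph{Scale invariance.} For every labeling, the Fourier transform is linear. Hence $\|F_N(cW)_\pi F_N\|_j=c\|F_NW_\pi F_N\|_j$ for $j=1,2$, the factor $c$ cancels in the ratio, and taking the minimum over $\pi$ gives the first claim.

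\emph{Affine relabelings.} Replace $\pi$ by $\pi'(x)=\pi(ax+b)$. Then $f_{\pi'}(x,y)=f_\pi(ax+b,ay+b)$. Substituting $x'=ax+b$, $y'=ay+b$ in \eqref{eq:fourier}, which is a bijection of $\mathbb{Z}_N$ because $a$ is a unit, gives
$$
\widehat f_{\pi'}(r,v)=e^{\frac{2\pi i b(r+v)a^{-1}}{N}}\,\widehat f_\pi(a^{-1}r,a^{-1}v).
$$
So $\widehat f_{\pi'}$ is a permutation of the entries of $\widehat f_\pi$ multiplied by unimodular factors. Both the $\ell^1$ and $\ell^2$ norms are therefore preserved.

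\emph{The chain \eqref{eq:energy}.}
\begin{itemize}
\item \emph{Upper bound.} By Cauchy--Schwarz over $N^2$ entries, $\|C\|_1\le N\|C\|_2$ for any $C$, so $\operatorname{FR}_{\min}\le N$.
\item \emph{Middle inequality.} Use the duality $\|C\|_1\ge|\operatorname{tr}(UC)|$ for any unitary $U$; in fact $\|C\|_1$ dominates the trace (nuclear) norm $\|C\|_{S_1}$, since $\|C\|_{S_1}=\sup_U|\operatorname{tr}(UC)|\le\sum|C_{rv}|$ because $|U_{vr}|\le1$. The trace norm is unitarily invariant, and $F_N$ is unitary while $W_\pi=P^{\mathsf T}WP$ is real symmetric. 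Hence
$$
\|F_NW_\pi F_N\|_{S_1}=\|W\|_{S_1}=\sum_j|\lambda_j(W)|=\mathcal E(W)
$$
for every labeling. Dividing by $\|W\|_2$ via \eqref{eq:parseval} gives the middle inequality.
\item \emph{Lower bound $\sqrt2$.} Write $\mathcal E(W)=\sum|\lambda_j|$ and $\|W\|_2^2=\sum\lambda_j^2$. Since $\operatorname{tr}W=0$, we have $\sum\lambda_j=0$, so with $P=\sum_{\lambda_j>0}\lambda_j$ we get $\mathcal E=2P$. Because $W\ne0$, $P>0$. The positive eigenvalues satisfy $\sum_{\lambda_j>0}\lambda_j^2\le P^2$, and likewise the negative ones, so $\|W\|_2^2\le 2P^2=\mathcal E^2/2$. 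This gives $\mathcal E/\|W\|_2\ge\sqrt2$.
\end{itemize}

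\emph{Equality for circulants.} If $W_\pi$ is circulant, $f_\pi(x,y)=g(y-x)$, then $\widehat f_\pi(r,v)$ vanishes unless $r+v\equiv0$. The Fourier matrix therefore has at most one nonzero entry in each row and column, and those entries are $\pm$ unimodular multiples of the eigenvalues $\sqrt N\,\widehat g(v)$ (up to normalization). So $\|F_NW_\pi F_N\|_1=\sum_v|\lambda_v|=\mathcal E(W)$, equality holds for that labeling, and hence $\operatorname{FR}_{\min}=\mathcal E/\|W\|_2$.

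The main obstacle is bookkeeping in the circulant computation. One must check that, with both signs negative in \eqref{eq:fourier}, the support is the antidiagonal $r+v=0$ and that the entries are exactly the eigenvalues $\sum_d g(d)e^{2\pi i vd/N}$ in absolute value. This is a direct substitution $y=x+d$.
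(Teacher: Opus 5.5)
Your proposal is correct and follows the paper's proof essentially step by step: scaling, the affine substitution, the bound of the nuclear norm by the entrywise $\ell^1$ norm together with unitary invariance, the trace-zero argument, Cauchy--Schwarz, and diagonalization of circulants. The differences are cosmetic. You bound $\|C\|_{S_1}$ via $\|C\|_{S_1}=\sup_U|\operatorname{tr}(UC)|$, where the paper applies the triangle inequality over matrix units. You also place the circulant transform on the antidiagonal $r+v\equiv0$ by direct substitution, where the paper uses $F_N^*=F_NR_N$ to permute the columns of a diagonal matrix.
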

\end{samepage}
\begin{proof}[\normalfont Proof]
Scaling multiplies both Fourier norms by the same number. For the affine
claim, give the vertex with old label $x$ the new label $ax+b$. Substitution
in \eqref{eq:fourier} gives
$$
\widehat f'(r,v)
=e^{-\frac{2\pi ib(r+v)}{N}}\widehat f(ar,av).
$$
The factor has absolute value one, and multiplication by $a$ permutes the
frequencies. Both norms are therefore unchanged.

To prove the energy bound, recall that the nuclear norm of a matrix is the
sum of its singular values. Denote it by $\|\cdot\|_{S_1}$. Multiplication
by unitary matrices preserves singular values. Moreover, if $E_{rv}$ is a
matrix unit, then $\|E_{rv}\|_{S_1}=1$, so the triangle inequality gives
$$
\|C\|_{S_1}
\le\sum_{r,v}|C_{rv}|\|E_{rv}\|_{S_1}=\|C\|_1.
$$
Since $W$ is real symmetric, its singular values are the absolute values
of its eigenvalues. For every labeling,
$$
\mathcal E(W)=\|F_NW_\pi F_N\|_{S_1}
\le\|F_NW_\pi F_N\|_1.
$$
Taking the minimum and dividing by $\|W\|_2$ proves the middle inequality.

The zero diagonal gives $\operatorname{tr}W=0$. The positive eigenvalues
and the absolute values of the negative eigenvalues consequently have the
same sum, say $p>0$. Squaring a sum of nonnegative numbers can only
increase the sum of their squares. Hence
$$
\|W\|_2^2=\sum_j\lambda_j(W)^2\le2p^2,
$$
while $\mathcal E(W)=2p$. This proves the first inequality. The last one
is Cauchy--Schwarz applied to the $N^2$ Fourier entries.

Finally, suppose $W$ is circulant in a chosen labeling. Its character
vectors are eigenvectors, so $F_NWF_N^*$ is diagonal. Let $R_N$ be the
permutation matrix for $x\mapsto-x$. The identities $F_N^*=F_NR_N$ and
$R_N^2=I$ show that $F_NWF_N$ is obtained from that diagonal matrix by
permuting columns. Its entrywise norm is therefore $\mathcal E(W)$, which
attains the lower bound.
\end{proof}

The minimum also varies continuously with the weights. It is useful to
state this before imposing any arithmetic assumptions on the input.

\begin{samepage}
\begin{proposition}\label{prop:continuity}
Extend $\Phi$ to all real symmetric matrices by the same formula. Then
\begin{equation}\label{eq:phi-lipschitz}
|\Phi(U)-\Phi(V)|\le N\|U-V\|_2.
\end{equation}
For nonzero $U,V$, extending the Fourier-ratio definition in the same way,
\begin{align}
|\operatorname{FR}_{\min}(U)-\operatorname{FR}_{\min}(V)|
&\le N\left\|\frac{U}{\|U\|_2}-\frac{V}{\|V\|_2}\right\|_2
\label{eq:normalized-lipschitz}\\
&\le\frac{2N\|U-V\|_2}{\min\{\|U\|_2,\|V\|_2\}}.
\nonumber
\end{align}
\end{proposition}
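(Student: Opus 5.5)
The plan is to show that each fixed-labeling quantity is $N$-Lipschitz, and then pass to the minimum over labelings.

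Fix a labeling $\pi$. The map $C\mapsto F_NC_\pi F_N$ is linear, and it is unitary in the Frobenius norm, as in \eqref{eq:parseval}. Here the relabeling $C\mapsto C_\pi=P^{\mathsf T}CP$ is itself a Frobenius isometry. By Cauchy--Schwarz over the $N^2$ entries, every $N\times N$ matrix $D$ satisfies $\|D\|_1\le N\|D\|_2$. The triangle inequality then gives
$$
\bigl|\|F_NU_\pi F_N\|_1-\|F_NV_\pi F_N\|_1\bigr|
\le\|F_N(U-V)_\pi F_N\|_1\le N\|(U-V)_\pi\|_2=N\|U-V\|_2 .
$$
So for each fixed $\pi$ the two functions differ by at most $N\|U-V\|_2$. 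Let $\pi_V$ be a labeling attaining $\Phi(V)$; the minimum exists because there are finitely many labelings. Then
$$
\Phi(U)\le\|F_NU_{\pi_V}F_N\|_1\le\Phi(V)+N\|U-V\|_2 .
$$
Exchanging the roles of $U$ and $V$ gives \eqref{eq:phi-lipschitz}.

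For the Fourier ratio, note that $\|W_\pi\|_2=\|W\|_2$ is independent of $\pi$. Hence $\operatorname{FR}_{\min}(W)=\Phi(W)/\|W\|_2=\Phi\bigl(W/\|W\|_2\bigr)$, using the homogeneity $\Phi(cW)=c\Phi(W)$ for $c>0$. Applying \eqref{eq:phi-lipschitz} to the normalized matrices $U/\|U\|_2$ and $V/\|V\|_2$ yields the first inequality in \eqref{eq:normalized-lipschitz}.

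For the second inequality, assume without loss of generality that $\|V\|_2\le\|U\|_2$. Write
$$
\frac{U}{\|U\|_2}-\frac{V}{\|V\|_2}
=\frac{U-V}{\|V\|_2}+U\Bigl(\frac{1}{\|U\|_2}-\frac{1}{\|V\|_2}\Bigr).
$$
The second term has norm $\bigl|\|V\|_2-\|U\|_2\bigr|/\|V\|_2$, which is at most $\|U-V\|_2/\|V\|_2$ by the reverse triangle inequality. Adding the two bounds gives $2\|U-V\|_2/\min\{\|U\|_2,\|V\|_2\}$.

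No step is a real obstacle. The only point needing care is that the minimizing labelings for $U$ and $V$ may differ. This is handled by comparing at the minimizer of one matrix rather than at a common labeling, and by using that the per-labeling Lipschitz bound is uniform in $\pi$.
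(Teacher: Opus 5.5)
Your proof is correct and matches the paper's: the same per-labeling bound via the triangle inequality, Cauchy--Schwarz and Parseval, the same comparison at a minimizing labeling of one matrix, the same application to the normalized matrices, and the same reverse-triangle split for the last inequality. The only difference is cosmetic. You put $\|V\|_2$ in the denominator after a without-loss-of-generality reduction, whereas the paper bounds with $\|U\|_2$ and then replaces it by $\min\{\|U\|_2,\|V\|_2\}$.
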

\end{samepage}
\begin{proof}[\normalfont Proof]
For any fixed labeling, the triangle inequality, Cauchy--Schwarz, and
Parseval give
$$
\left|\|F_NU_\pi F_N\|_1-\|F_NV_\pi F_N\|_1\right|
\le\|F_N(U_\pi-V_\pi)F_N\|_1
\le N\|U-V\|_2.
$$
Choose a minimizing labeling for $V$ to obtain one direction of
\eqref{eq:phi-lipschitz}, and interchange $U,V$ for the other direction.
Apply that estimate to the normalized matrices for the first inequality
in \eqref{eq:normalized-lipschitz}. If $a=\|U\|_2$ and $b=\|V\|_2$, then
$$
\left\|\frac{U}{a}-\frac{V}{b}\right\|_2
\le\frac{\|U-V\|_2}{a}+\frac{|a-b|}{a}
\le\frac{2\|U-V\|_2}{a}.
$$
Here the last step is the reverse triangle inequality. Replacing $a$ in
the denominator by $\min\{a,b\}$ proves the displayed assertion.
\end{proof}

\section{Reflection symmetry and a weighted cone}

We now ask what it means for every Fourier coefficient of a real adjacency
matrix to be real. The answer comes directly from conjugating the Fourier
transform. 

An involution is a permutation $\tau$ satisfying $\tau^2=\operatorname{id}$.
It is fixed-point-free if $\tau(v)\ne v$ for every vertex. An automorphism
of a weighted graph is a permutation preserving all weights, including
zero weights.

\begin{samepage}
\begin{lemma}\label{lem:reflection}
Let $A$ be a real matrix indexed by $\mathbb{Z}_N$, and let $R=R_N$ represent
$x\mapsto-x$. Then
\begin{equation}\label{eq:conjugation}
\overline{F_NAF_N}=F_N(RAR)F_N.
\end{equation}
In particular,
\begin{equation}\label{eq:real-iff}
F_NAF_N\text{ is real if and only if }A=RAR,
\end{equation}
and
\begin{equation}\label{eq:imag-identity}
\|\operatorname{Im}(F_NAF_N)\|_2^2
=\frac{1}{4}\|A-RAR\|_2^2.
\end{equation}
\end{lemma}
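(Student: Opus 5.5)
The plan is to prove the conjugation identity \eqref{eq:conjugation} directly from the entries of $F_N$, and then derive the other two claims from it using only injectivity and unitarity of the transform.

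\emph{Step 1: the conjugation identity.} Since $A$ is real, $\overline{F_NAF_N}=\overline{F_N}\,A\,\overline{F_N}$. The entries of $F_N$ are $N^{-1/2}e^{-2\pi irx/N}$, so
$$
\overline{F_N}(r,x)=N^{-1/2}e^{2\pi irx/N}=F_N(r,-x).
$$
This says $\overline{F_N}=F_NR$. Because $F_N$ and $R$ are both symmetric, taking transposes also gives $\overline{F_N}=RF_N$. Substituting the first form on the left of $A$ and the second form on the right yields
$$
\overline{F_NAF_N}=F_NR\,A\,RF_N=F_N(RAR)F_N,
$$
which is \eqref{eq:conjugation}. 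One small point needs care: $F_NR$ should be read as reindexing columns by $x\mapsto -x$. Since this map is an involution, $R=R^{\mathsf T}$, so the order of factors causes no trouble.

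\emph{Step 2: the realness criterion.} The matrix $F_NAF_N$ is real exactly when it equals its own conjugate. By \eqref{eq:conjugation}, that means $F_NAF_N=F_N(RAR)F_N$. Since $F_N$ is unitary, hence invertible, this holds if and only if $A=RAR$, which is \eqref{eq:real-iff}.

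\emph{Step 3: the imaginary-part identity.} Write $C=F_NAF_N$. Then
$$
\operatorname{Im}C=\frac{C-\overline C}{2i}=\frac{1}{2i}\,F_N(A-RAR)F_N .
$$
Multiplication on both sides by the unitary matrix $F_N$ preserves the Frobenius norm; this is the same Parseval fact used in \eqref{eq:parseval}. Taking squared Frobenius norms gives
$$
\|\operatorname{Im}C\|_2^2=\frac14\|A-RAR\|_2^2,
$$
which is \eqref{eq:imag-identity}.

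There is no real obstacle in this argument. The only point needing care is the sign bookkeeping in Step 1: one must check that the conjugation flips the sign in both variables simultaneously. It does, because the same matrix $F_N$, carrying the same negative sign, acts on both sides, as the paper stresses after \eqref{eq:fourier}.
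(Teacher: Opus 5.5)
Your proof is correct and follows the paper's argument step for step: the identity $\overline{F_N}=F_NR=RF_N$ gives the conjugation formula, invertibility of $F_N$ gives the realness criterion, and writing $\operatorname{Im}(F_NAF_N)=\frac{1}{2i}F_N(A-RAR)F_N$ and using unitarity gives the norm identity. Your entrywise check of $\overline{F_N}=F_NR$ spells out a fact the paper states without verification.
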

\end{samepage}
\begin{proof}[\normalfont Proof]
The Fourier matrix satisfies $\overline{F_N}=F_NR=RF_N$. Since $A$ is
real, this proves \eqref{eq:conjugation}. The Fourier matrix is invertible,
so equality of the transform with its conjugate is equivalent to $A=RAR$.
Finally,
$$
\operatorname{Im}(F_NAF_N)=\frac{1}{2i}F_N(A-RAR)F_N.
$$
Taking the squared Frobenius norm and using unitarity proves
\eqref{eq:imag-identity}.
\end{proof}

We say that a labeling is a real Fourier labeling if every entry of its Fourier matrix is real. Reflection fixes the solutions of $2x=0$ in $\mathbb{Z}_N$, of which there
are $\gcd(2,N)$. For $N\ge3$, a graph therefore has a real Fourier
labeling precisely when it has an involutory automorphism with one fixed
vertex if $N$ is odd, or two fixed vertices if $N$ is even. For the converse
direction, assign the fixed vertices the fixed labels and assign each
transposed pair a pair of opposite labels. This explains why an odd-order
cone over an even-order source is well suited to detecting a
fixed-point-free involution of the source.

Let $H$ be a nonnegative weighted graph of even order $n\ge4$, with
adjacency matrix $B_H$. Let
$$
s=\sum_{u<v}(B_H)_{uv},\quad 
t=\sum_{u<v}(B_H)_{uv}^2,\quad 
  N=n+1.
$$
For $M>s$, form $W_M(H)$ by adjoining a vertex $c$ and giving every edge
from $c$ to $H$ weight $M$. We use the same notation for the weighted graph
and its adjacency matrix when no confusion can arise. Its Frobenius norm is
\begin{equation}\label{eq:cone-norm}
Q=\sqrt{2nM^2+2t}.
\end{equation}

After placing the new vertex at label zero, the star has a particularly simple Fourier transform: it is positive on the two coordinate axes and negative away from them. The condition $M>s$ ensures that adding the source cannot reverse any of these signs. We can therefore add the signed real parts without first knowing the labeling. Lemma~\ref{lem:reflection} will then identify the equality case.

\begin{samepage}
\begin{theorem}\label{thm:cone}
With the notation above, let
\begin{equation}\label{eq:T}
T=\frac{4Mn^2-4s}{N}.
\end{equation}
Then
\begin{equation}\label{eq:cone-bound}
\Phi(W_M(H))\ge T,
\operatorname{FR}_{\min}(W_M(H))\ge\frac{T}{Q}.
\end{equation}
Equality holds if and only if $H$ has a fixed-point-free involutory
automorphism preserving its weights.
\end{theorem}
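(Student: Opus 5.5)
The plan is to reduce to labelings that put the added vertex $c$ at label $0$, split the Fourier matrix into a star part and a source part, and bound $\|\cdot\|_1$ below by a signed sum of real parts whose value does not depend on the labeling. By Proposition~\ref{prop:basic}, a translation $x\mapsto x+b$ preserves $\|F_NW_\pi F_N\|_1$. So it suffices to consider labelings $\pi$ with $\pi(0)=c$. For such $\pi$ write $f_\pi=g+h$. Here $g=M(\mathbf 1_{x=0,y\ne0}+\mathbf 1_{y=0,x\ne0})$ is the labeled star, and $h$ is the labeled copy of $B_H$ supported on $\{x,y\ne0\}$.

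First I compute $\widehat g$ directly from \eqref{eq:fourier}, using $\sum_{y\ne0}e^{-2\pi ivy/N}=N\delta_{v0}-1$:
$$
\widehat g(r,v)=\frac{M}{N}\bigl(N\delta_{r0}+N\delta_{v0}-2\bigr).
$$
This gives $2Mn/N$ at the origin, $M(n-1)/N$ on the rest of the two axes, and $-2M/N$ off the axes. Next, trivially $|\widehat h(r,v)|\le\frac1N\sum_{x,y}h(x,y)=2s/N$.

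Define the sign pattern $\sigma(r,v)=+1$ if $r=0$ or $v=0$, and $\sigma(r,v)=-1$ otherwise. Since $M>s$ and $n\ge4$, we have $M(n-1)>2s$ and $2M>2s$. Hence $\sigma\operatorname{Re}\widehat f_\pi\ge0$ everywhere; we only need $|z|\ge\sigma\operatorname{Re}z$. This gives
$$
\|\widehat f_\pi\|_1\ge\sum_{r,v}\sigma(r,v)\operatorname{Re}\widehat f_\pi(r,v)=\operatorname{Re}\sum_{r,v}\sigma\,(\widehat g+\widehat h).
$$

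Now I evaluate the signed sums, using inversion identities.
\begin{itemize}
\item The star gives $\frac{1}{N}(2Mn+2n\cdot M(n-1)+n^2\cdot 2M)=4Mn^2/N$.
\item For $h$, $\sum_{r,v}\widehat h=Nh(0,0)=0$ and $\sum_v\widehat h(0,v)=\sum_x h(x,0)=0$, because $c$ sits at label $0$; the same holds for the column sum. Also $\widehat h(0,0)=2s/N$.
\item So the axes contribute $-2s/N$ and the off-axis entries contribute $+2s/N$. Hence $\sum\sigma\widehat h=-4s/N$.
\end{itemize}
Summing, $\|\widehat f_\pi\|_1\ge T$ for every labeling. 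This gives the bound on $\Phi$, and dividing by $Q$ from \eqref{eq:cone-norm} gives the bound on $\operatorname{FR}_{\min}$.

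For equality, the signs of the real parts are strictly determined, so $|z|=\sigma\operatorname{Re}z$ holds iff $z$ is real. Thus $\|\widehat f_\pi\|_1=T$ iff $F_NW_\pi F_N$ is real. By Lemma~\ref{lem:reflection}, this happens iff $W_\pi=RW_\pi R$, i.e.\ iff $x\mapsto-x$ is a weight-preserving automorphism in the labeling $\pi$. Because $N$ is odd, the reflection fixes only label $0$, namely $c$. It therefore restricts to a fixed-point-free involutory automorphism of $H$.

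Conversely, given such an involution $\tau$ of $H$, put $c$ at $0$ and give each $\tau$-pair opposite labels $\pm j$. Then $W_\pi=RW_\pi R$, so the Fourier matrix is real and equality holds. Since every labeling is a translate of one with $c$ at $0$, the equality case of the minimum is exactly this. The only delicate step is the sign bookkeeping: checking that $M>s$ (with $n\ge4$ for the axis entries) strictly beats $2s/N$, and that the signed sum of $\widehat h$ is labeling-independent. Both rest on the row and column of label $0$ in $h$ vanishing.
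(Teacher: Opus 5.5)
Your proposal is correct and follows the paper's proof essentially step for step. Both arguments center $c$ at label $0$, split off the star, and compute its transform explicitly, using the bound $|\widehat h|\le 2s/N$. Both then sum against the sign pattern $\sigma$, using the total, row and column sum identities to obtain the labeling-independent value $T$. Finally, both invoke Lemma~\ref{lem:reflection} together with the odd order of $N$ for the equality case.
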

\end{samepage}
\begin{proof}[\normalfont Proof]
Translation invariance allows us to give $c$ label $0$. We call such a
labeling centered. Write the labeled cone as $W=MS+B$, where $S$ is the
adjacency matrix of the star centered at $0$, and $B$ is the labeled source
extended by a zero row and column at $0$. Put $D=F_NBF_N$ and $C=F_NWF_N$.

The star is simple enough to transform explicitly. Its kernel is
$$
S(x,y)=1_{\{x=0\}}+1_{\{y=0\}}-2 1_{\{x=y=0\}}.
$$
Summing the characters in each term gives
\begin{equation}\label{eq:star-transform}
(F_NSF_N)_{rv}
=1_{\{r=0\}}+1_{\{v=0\}}-\frac{2}{N}
=\begin{cases}
\frac{2n}{N},&r=v=0,\\
\frac{n-1}{N},&\text{exactly one index is zero},\\
-\frac{2}{N},&r,v\ne0.
\end{cases}
\end{equation}
Because the total weight of the ordered source edges is $2s$, $|D_{rv}|\le\frac{2s}{N}$. Define
$$
\sigma_{rv}=\begin{cases}
1,&r=0\text{ or }v=0,\\
-1,&r,v\ne0.
\end{cases}
$$
The star contribution dominates the source contribution. More precisely,
\begin{equation}\label{eq:sign-margin}
b_{rv}:=\sigma_{rv}\operatorname{Re}C_{rv}
\ge\frac{2(M-s)}{N}>0.
\end{equation}
Away from the axes this follows from the last line of \eqref{eq:star-transform}. On an axis, away from the origin, it follows from $(n-1)M-2s\ge2(M-s)$, using $n\ge4$. At the origin the coefficient is positive, and the same lower bound holds.

It follows entry by entry that
\begin{equation}\label{eq:signed-bound}
\|C\|_1\ge\sum_{r,v}\sigma_{rv}\operatorname{Re}C_{rv},
\end{equation}
with equality if and only if every entry of $C$ is real. To calculate the right side, first sum the three types of star entries. There is one origin,
there are $2n$ other axis entries, and there are $n^2$ entries away from
the axes. Thus, their signed sum is
$$
\frac{2n+2n(n-1)+2n^2}{N}=\frac{4n^2}{N}.
$$

The source contribution is also independent of its labeling. Character orthogonality and the zero row and column of $B$ give
\begin{equation}\label{eq:sum-identities}
\sum_{r,v}D_{rv}=0,
 \sum_vD_{0v}=\sum_rD_{r0}=0,
  D_{00}=\frac{2s}{N}.
\end{equation}
For example, summing $D_{0v}$ over $v$ leaves $\sum_xB_{x0}=0$, and summing over both indices leaves $NB_{00}=0$. Since
$$
\sigma_{rv}=-1+2 1_{\{r=0\}}+2 1_{\{v=0\}}
-2 1_{\{r=v=0\}},
$$
we obtain $\sum_{r,v}\sigma_{rv}D_{rv}=-\frac{4s}{N}$. Substitution in \eqref{eq:signed-bound} proves $\|C\|_1\ge T$.

It remains to identify equality. By Lemma~\ref{lem:reflection}, the Fourier matrix is real if and only if $W=RWR$. The star is already reflection-invariant, so this is equivalent to $B=RBR$. Since $N$ is odd, reflection fixes only $0$ and pairs all source labels. Its restriction is therefore a fixed-point-free involutory automorphism of $H$. Conversely, given such an automorphism, assign its transposed vertex pairs the label pairs $\{j,-j\}$ for $1\le j\le\frac{n}{2}$. Then $B=RBR$, the Fourier matrix is real, and equality holds. There are finitely many labelings, so the same characterization applies to the minimum. Division by \eqref{eq:cone-norm} proves the Fourier-ratio statement.
\end{proof}

\begin{figure}[htbp]
\centering
\begin{tikzpicture}[scale=0.87,
 vertex/.style={circle,draw,fill=white,minimum size=7mm,inner sep=1pt},
 source/.style={line width=0.9pt},
 cone/.style={draw=blue!60!black,line width=0.7pt}]
\node at (1,2.1) {Source $H$};
\node at (8,2.1) {Weighted cone $W_M(H)$};
\coordinate (a) at (0,1);
\coordinate (b) at (0,-1);
\coordinate (d) at (2,1);
\coordinate (e) at (2,-1);
\draw[source] (a)--node[left] {$1$}(b);
\draw[source] (d)--node[right] {$1$}(e);
\node[vertex] at (a) {$1$};
\node[vertex] at (b) {$2$};
\node[vertex] at (d) {$-2$};
\node[vertex] at (e) {$-1$};
\draw[-{Stealth},line width=0.7pt] (3.1,0)--(4.7,0);
\node at (3.9,0.8) {add a vertex};
\node at (3.9,-0.8) {weight $M$};
\coordinate (f) at (6.5,1);
\coordinate (g) at (6.5,-1);
\coordinate (h) at (9.5,1);
\coordinate (j) at (9.5,-1);
\coordinate (c) at (8,0);
\draw[cone] (c)--(f);
\node at (7.3,1.1) {$M$};
\draw[cone] (c)--(g) (c)--(h) (c)--(j);
\draw[source] (f)--node[left] {$1$}(g);
\draw[source] (h)--node[right] {$1$}(j);
\node[vertex] at (f) {$1$};
\node[vertex] at (g) {$2$};
\node[vertex] at (h) {$-2$};
\node[vertex] at (j) {$-1$};
\node[vertex] at (c) {$0$};
\node at (1,-1.9) {$s=2$};
\node at (8,-1.9) {$M>2$};
\end{tikzpicture}
\caption{The cone construction for the illustrative source $H=2K_2$.
Vertex labels are inside circles and edge weights are outside. All four
blue edges have weight $M$. Reflection pairs $1$ with $-1$ and $2$ with
$-2$, fixes the cone vertex, and preserves the source. The large weight
prescribes the Fourier signs; the source symmetry then gives equality.}
\label{fig:cone}
\end{figure}
Figure~\ref{fig:cone} shows the construction when the source consists of two disjoint edges. No integrality entered the proof. The role of $M>s$ was to fix the signs of the real parts. Once that has happened, the elementary inequality $|z|\ge\sigma\operatorname{Re}z$ has exactly the equality condition we need.

\section{Distance from symmetry}

The cone theorem detects whether a symmetry exists. We next ask how the Fourier norm changes when the symmetry fails by a small amount. The appropriate measure of failure is already present in \eqref{eq:imag-identity}.

Let $\mathcal I_n$ be the set of fixed-point-free involutions of the source vertex set. For $\tau\in\mathcal I_n$, write $P_\tau$ for its permutation matrix and set
\begin{equation}\label{eq:def-a}
q_\tau(B_H)=\frac{1}{4}\|B_H-P_\tau B_HP_\tau\|_2^2,
  a(B_H)=\min_{\tau\in\mathcal I_n}q_\tau(B_H).
\end{equation}
The matrix $P_\tau$ equals its transpose and inverse. In particular, $a(B_H)=0$ precisely when the required automorphism exists.

\begin{samepage}
\begin{proposition}\label{prop:projection}
For fixed $\tau\in\mathcal I_n$, the matrix
$$
B_\tau^+=\frac{1}{2}(B_H+P_\tau B_HP_\tau)
$$
is the unique closest $\tau$-invariant nonnegative symmetric zero-diagonal
matrix to $B_H$ in Frobenius norm, and
\begin{equation}\label{eq:projection}
\|B_H-B_\tau^+\|_2^2=q_\tau(B_H).
\end{equation}
For a simple source graph,
\begin{equation}\label{eq:edit-count}
q_\tau(B_H)=\frac{1}{2}|E(H)\mathbin\triangle\tau(E(H))|
\end{equation}
is the minimum number of edge additions or deletions needed to make
$\tau$ an automorphism.
\end{proposition}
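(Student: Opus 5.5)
The plan is to treat the conjugation $X\mapsto P_\tau XP_\tau$ as a linear isometric involution on the real space of symmetric matrices with the Frobenius inner product, and then read off the projection statement from the resulting orthogonal decomposition. Because $P_\tau$ is a symmetric permutation matrix with $P_\tau^2=I$, this map is self-adjoint:
$$
\langle P_\tau XP_\tau,Y\rangle=\operatorname{tr}(P_\tau X P_\tau Y)=\langle X,P_\tau YP_\tau\rangle .
$$
Its square is the identity, so the space splits orthogonally into the $+1$ eigenspace (the $\tau$-invariant matrices) and the $-1$ eigenspace. The map also preserves symmetry, zero diagonal, and nonnegativity. Hence $B_\tau^+$ is symmetric, zero-diagonal, nonnegative and $\tau$-invariant, while $B_H-B_\tau^+=\frac12(B_H-P_\tau B_HP_\tau)$ lies in the $-1$ eigenspace.

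For the closest-point claim, take any $\tau$-invariant $X$ in the admissible set. Then $B_\tau^+-X$ lies in the $+1$ eigenspace and is orthogonal to $B_H-B_\tau^+$, so Pythagoras gives
$$
\|B_H-X\|_2^2=\|B_H-B_\tau^+\|_2^2+\|B_\tau^+-X\|_2^2 .
$$
This proves both minimality and uniqueness. It also shows that the nonnegativity constraint is never active, since the unconstrained projection is already feasible. Formula \eqref{eq:projection} follows directly:
$$
\|B_H-B_\tau^+\|_2^2=\tfrac14\|B_H-P_\tau B_HP_\tau\|_2^2=q_\tau(B_H).
$$

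For a simple source, the entry $(P_\tau B_HP_\tau)_{uv}$ equals $(B_H)_{\tau(u)\tau(v)}$. The matrix $B_H-P_\tau B_HP_\tau$ therefore has entries $\pm1$ exactly at the ordered pairs $(u,v)$ with $\{u,v\}\in E(H)\mathbin\triangle\tau(E(H))$, and $0$ elsewhere. Counting each unordered pair twice gives
$$
\|B_H-P_\tau B_HP_\tau\|_2^2=2|E(H)\mathbin\triangle\tau(E(H))|,
$$
which is \eqref{eq:edit-count}.

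The main obstacle is the edit-count interpretation, because the real projection $B_\tau^+$ has half-integer entries and is not itself a simple graph. I would argue combinatorially through the orbits of $\tau$ on $\binom{V}{2}$. Each orbit has size $1$ or $2$, and $\tau$ is an automorphism exactly when every orbit is entirely in or entirely out of the edge set. The orbits meeting the symmetric difference are precisely the size-$2$ orbits $\{e,\tau(e)\}$ containing exactly one edge. Each such orbit needs at least one edit, and one edit suffices: either add the missing edge or delete the present one. Edits in distinct orbits are independent. The minimum number of edits is therefore the number of such orbits, which is $\frac12|E(H)\mathbin\triangle\tau(E(H))|$, since each bad orbit contributes exactly two elements to the symmetric difference.
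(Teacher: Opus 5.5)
Your proof is correct and follows essentially the same route as the paper: averaging is the orthogonal projection for the self-adjoint involution $X\mapsto P_\tau XP_\tau$, and the Pythagorean identity gives minimality and uniqueness; the edit count then comes from the one- and two-element orbits of $\tau$ on unordered pairs. The only cosmetic difference is that you obtain the edit-count identity by counting the $\pm1$ ordered entries of $B_H-P_\tau B_HP_\tau$ directly, whereas the paper computes the contribution $(a-b)^2$ of each two-element orbit, which also covers weighted sources.
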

\end{samepage}
\begin{proof}[\normalfont Proof]
Conjugation by $P_\tau$ is an orthogonal involution on the space of real
matrices with the Frobenius inner product. Averaging a matrix with its
image is the orthogonal projection onto the fixed subspace. In particular,
for every invariant matrix $X$,
$$
\|B_H-X\|_2^2
=\|B_H-B_\tau^+\|_2^2+\|B_\tau^+-X\|_2^2.
$$
The average preserves nonnegativity, symmetry, and zero diagonal. This
proves the minimizing assertion, uniqueness, and \eqref{eq:projection}.

To interpret the expression combinatorially, let $\tau$ act on unordered
vertex pairs. Each orbit has one or two elements. A one-element orbit
contributes zero to $q_\tau$. A two-element orbit with weights $a,b$
contributes $(a-b)^2$: its four ordered matrix entries each contribute
$(a-b)^2$ before division by four. For a simple graph, this contribution
is one exactly when one pair is an edge and the other is not. Adding the
missing edge or deleting the existing edge repairs that orbit. Different
orbits can be repaired independently, proving \eqref{eq:edit-count}.
\end{proof}

The distance statement and the edit statement have slightly different meanings. The closest weighted matrix
$B_\tau^+$ may have entries $\frac{1}{2}$ even when $B_H$ is simple. If the
approximating matrix is required to remain simple, its minimum squared
Frobenius distance is $2q_\tau(B_H)$, because each undirected edge edit
changes two matrix entries. The minimum number of edits is $q_\tau(B_H)$. Thus the weighted squared distance and the unweighted edit count have the same numerical value, although their minimizing matrices need not coincide.

To connect this distance with the Fourier norm, consider one coefficient whose real part already has the prescribed sign. Its absolute value exceeds the signed real part by a quantity quadratic in the imaginary part. The next proof writes this observation as an exact identity and then sums it over all frequencies.

\begin{samepage}
\begin{theorem}\label{thm:stability}
Under the assumptions of Theorem~\ref{thm:cone},
\begin{equation}\label{eq:stability}
\frac{a(B_H)}{4M}
\le\Phi(W_M(H))-T
\le\frac{Na(B_H)}{4(M-s)}.
\end{equation}
The corresponding Fourier-ratio estimates follow by division by $Q$.
If $M\ge2s$ and $M>0$, the upper bound is at most $\frac{Na(B_H)}{2M}$.
\end{theorem}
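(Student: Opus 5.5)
Both bounds come from one exact identity. Fix a centered labeling, so that $C=F_NWF_N$ with $W=MS+B$. For each coefficient write $C_{rv}=\sigma_{rv}b_{rv}+i\,y_{rv}$, where $b_{rv}=\sigma_{rv}\operatorname{Re}C_{rv}>0$ by \eqref{eq:sign-margin} and $y_{rv}=\operatorname{Im}C_{rv}$. Then
$$
|C_{rv}|-b_{rv}=\sqrt{b_{rv}^2+y_{rv}^2}-b_{rv}=\frac{y_{rv}^2}{\sqrt{b_{rv}^2+y_{rv}^2}+b_{rv}}.
$$
The proof of Theorem~\ref{thm:cone} shows that $\sum_{r,v}b_{rv}=T$ for every centered labeling. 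Hence $\|C\|_1-T$ is the sum of these quadratic-over-linear terms. Since the star is real, Lemma~\ref{lem:reflection} gives $\sum_{r,v}y_{rv}^2=\frac14\|B-RBR\|_2^2$. The restriction of $R$ to the source labels is a fixed-point-free involution $\tau$ of $V(H)$, and this quantity equals $q_\tau(B_H)$. The extra row and column of $B$ are zero and are fixed by $R$, so they contribute nothing.

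For the upper bound, I would choose the labeling from a minimizing $\tau\in\mathcal I_n$, pairing $\tau$-orbits with the label pairs $\{j,-j\}$, so that $\sum y_{rv}^2=a(B_H)$. Each denominator is at least $2b_{rv}\ge 4(M-s)/N$ by \eqref{eq:sign-margin}. Therefore
$$
\Phi-T\le\|C\|_1-T\le\frac{N\,a(B_H)}{4(M-s)}.
$$
If $M\ge2s$, then $M-s\ge M/2$, which gives the stated bound $\frac{Na(B_H)}{2M}$.

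For the lower bound, I would take a minimizing labeling, centered by translation invariance. Its induced $\tau$ satisfies $\sum y_{rv}^2=q_\tau\ge a(B_H)$. Here I need an upper bound on the denominators, $\sqrt{b^2+y^2}+b\le 2|C_{rv}|\le 4M$. A crude estimate suffices. We have $|(F_NSF_N)_{rv}|\le\frac{2n}{N}<2$, while $|D_{rv}|\le\frac{2s}{N}$. Hence $|C_{rv}|\le\frac{2nM+2s}{N}$, and $M>s$ gives $2nM+2s<2NM$, so $|C_{rv}|<2M$. Summing,
$$
\|C\|_1-T\ge\frac{\sum y_{rv}^2}{4M}\ge\frac{a(B_H)}{4M}.
$$

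The Fourier-ratio statements follow by dividing by $Q$, since $\|W\|_2=Q$ does not depend on the labeling. The step that needs the most care is the lower bound on the denominator, namely checking $|C_{rv}|\le 2M$ uniformly. The axis entries $\frac{n-1}{N}M$ together with the $\frac{2s}{N}$ perturbation have to be controlled, and the argument above handles them. The rest is bookkeeping that reuses the signed-sum computation from Theorem~\ref{thm:cone}.
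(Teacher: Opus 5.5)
Your proposal is correct and follows essentially the same argument as the paper. Both use the exact identity $|C_{rv}|-b_{rv}=(\operatorname{Im}C_{rv})^2/(|C_{rv}|+b_{rv})$, the signed sum $\sum_{r,v}b_{rv}=T$, the reflection identity giving total imaginary energy $q_\tau(B_H)$, and the denominator bounds $\frac{4(M-s)}{N}\le|C_{rv}|+b_{rv}<4M$; the lower bound then comes from $q_\tau\ge a(B_H)$ on a centered minimizing labeling, and the upper bound from labeling an optimal pairing by opposite residues.
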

\end{samepage}
\begin{proof}[\normalfont Proof]
Fix a centered labeling and use $C,D,b_{rv}$ from the cone proof. For an
entry $z=C_{rv}$,
\begin{equation}\label{eq:excess-identity}
|z|-b_{rv}
=\frac{(\operatorname{Im}z)^2}{|z|+b_{rv}}.
\end{equation}
Indeed, multiplying by the denominator gives
$|z|^2-b_{rv}^2=(\operatorname{Im}z)^2$. The denominator is positive by
\eqref{eq:sign-margin}. The total ordered edge weight of the cone is $2(nM+s)$, so
$$|z|\le\frac{2(nM+s)}{N}<2M.$$
It follows that $|z|+b_{rv}<4M$. In the other direction, \eqref{eq:sign-margin} gives
$$|z|+b_{rv}\ge2b_{rv}\ge\frac{4(M-s)}{N}.$$
The star transform is real, so $\operatorname{Im}C=\operatorname{Im}D$. The reflection identity therefore shows that the sum of the squared imaginary parts is $q_\tau(B_H)$, where $\tau$ is induced by this labeling. Since $\sum_{r,v}b_{rv}=T$, summing \eqref{eq:excess-identity} gives
\begin{equation}\label{eq:label-stability}
\frac{q_\tau(B_H)}{4M}
\le\|C\|_1-T
\le\frac{Nq_\tau(B_H)}{4(M-s)}.
\end{equation}
Every centered labeling has $q_\tau\ge a(B_H)$, proving the lower bound
for the minimum. For the upper bound, choose a pairing attaining $a(B_H)$
and label its pairs by opposite residues. This proves the claim.
\end{proof}

\begin{samepage}
\begin{corollary}\label{cor:quantized}
Suppose distinct values among the source weights, including zero, differ
by at least $h>0$. If $H$ has no fixed-point-free involutory automorphism,
then
$$
\Phi(W_M(H))\ge T+\frac{h^2}{4M},
\operatorname{FR}_{\min}(W_M(H))\ge\frac{T}{Q}+\frac{h^2}{4MQ}.
$$
In particular, this applies when all weights are nonnegative integer
multiples of $h$.
\end{corollary}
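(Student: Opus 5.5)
The plan is to read the corollary off the lower bound in Theorem~\ref{thm:stability}, which gives $\Phi(W_M(H))-T\ge a(B_H)/(4M)$. After that, everything reduces to the elementary claim that $a(B_H)\ge h^2$ whenever $H$ has no fixed-point-free involutory weight-preserving automorphism. Dividing by $Q$, as in the statement of Theorem~\ref{thm:stability}, then gives the Fourier-ratio inequality, since $\|W_M(H)\|_2=Q$ by \eqref{eq:cone-norm}.

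To bound $a(B_H)$, fix an arbitrary $\tau\in\mathcal I_n$. We use the orbit decomposition from the proof of Proposition~\ref{prop:projection}. The involution $\tau$ acts on unordered vertex pairs, and each orbit has one or two elements. A two-element orbit $\{\{u,v\},\{\tau u,\tau v\}\}$ with weights $a,b$ contributes exactly $(a-b)^2$ to $q_\tau(B_H)$, and all contributions are nonnegative.

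Since $H$ has no fixed-point-free involutory automorphism, $\tau$ is not an automorphism. So some pair $\{u,v\}$ has weight different from that of its image, and that pair lies in a two-element orbit with $a\ne b$. By the separation hypothesis, which explicitly includes zero among the compared values, $|a-b|\ge h$. Hence $q_\tau(B_H)\ge h^2$. Minimizing over the finite set $\mathcal I_n$ gives $a(B_H)\ge h^2$. Substituting into \eqref{eq:stability} yields both displayed inequalities.

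For the final sentence of the corollary, suppose all weights are nonnegative integer multiples of $h$. Then any two distinct values, zero included, differ by a nonzero integer multiple of $h$, so they differ by at least $h$, and the hypothesis holds. There is no real obstacle here. The only point needing care is to include zero among the weight values, since an automorphism must also preserve non-edges. The hypothesis is stated precisely to cover this case.
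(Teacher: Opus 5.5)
Your proposal is correct and follows the paper's proof: you bound $a(B_H)\ge h^2$ by observing that each $\tau\in\mathcal I_n$ leaves a mismatched two-element edge orbit contributing at least $h^2$ to $q_\tau(B_H)$, and then apply the lower bound of Theorem~\ref{thm:stability} and divide by $Q$. You also spell out a few details the paper's brief proof leaves implicit, namely the orbit accounting and the role of zero weights for non-edges.
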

\end{samepage}
\begin{proof}[\normalfont Proof]
Every pairing has an edge orbit whose two weights differ. Its contribution
to $q_\tau$ is at least $h^2$. The lower bound in
Theorem~\ref{thm:stability} now applies.
\end{proof}

\begin{example}\label{ex:real-small-gap}
Give each edge of $H=K_{1,3}$ weight $\varepsilon$, with $0<\varepsilon<\frac{1}{6}$, and take $M=1$. A fixed-point-free involution must pair the center with one leaf and pair the other two leaves. It has two mismatched edge orbits, so $a(B_H)=2\varepsilon^2$. Since the center is distinguished by its weighted degree, no such involution is an automorphism. Nevertheless,
$$
0<\Phi(W_1(H))-T
\le\frac{5\varepsilon^2}{2(1-3\varepsilon)}
\le5\varepsilon^2.
$$
The order and the added star weight remain fixed as this excess tends to
zero. Thus arbitrary real weights do not give a uniform discrete gap.
\end{example}

The factor $N$ separating the two sides of \eqref{eq:stability} comes from
the different sizes of the star transform on and off the Fourier axes.
If the imaginary part vanishes on the axes, only the smaller star
coefficients contribute to the excess. This allows matching upper and lower
bounds as the added weight tends to infinity.

\begin{samepage}
\begin{theorem}\label{thm:regular}
Suppose all vertices of $H$ have the same weighted degree. Under the
assumptions of Theorem~\ref{thm:cone},
\begin{equation}\label{eq:regular-stability}
\frac{Na(B_H)}{4(M+s)}
\le\Phi(W_M(H))-T
\le\frac{Na(B_H)}{4(M-s)}.
\end{equation}
For a fixed such source $H$,
\begin{equation}\label{eq:regular-limit}
\lim_{M\to\infty}M\bigl(\Phi(W_M(H))-T\bigr)
=\frac{N}{4}a(B_H).
\end{equation}
\end{theorem}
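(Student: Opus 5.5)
The upper bound in \eqref{eq:regular-stability} is already contained in Theorem~\ref{thm:stability}, so only the lower bound needs proof; the limit \eqref{eq:regular-limit} then follows by multiplying by $M$ and letting $M\to\infty$, since both $\frac{NM}{4(M\pm s)}$ tend to $\frac{N}{4}$.

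For the lower bound, fix a centered labeling and reuse $C$, $D$, $b_{rv}$ and the identity \eqref{eq:excess-identity}. The point is to show that under constant weighted degree the imaginary parts vanish on the Fourier axes. The entry $D_{0v}$ is $\frac1N\sum_y\bigl(\sum_x B(x,y)\bigr)e^{-2\pi i vy/N}$. The column sum $\sum_x B(x,y)$ equals the common weighted degree $d$ for every source label $y\neq0$, and it equals $0$ at $y=0$. So this column-sum function is $d\,(1-\delta_0)$, and its transform is real: it equals $\frac{d}{N}(N\delta_{v0}-1)$. Hence $\operatorname{Im}D_{0v}=0$, and by symmetry $\operatorname{Im}D_{r0}=0$.

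Therefore the whole excess comes from entries with $r,v\neq0$. For those entries I bound the denominator in \eqref{eq:excess-identity} from above:
\begin{itemize}
\item $|C_{rv}|\le \frac{2M}{N}+|D_{rv}|\le\frac{2M+2s}{N}$;
\item $b_{rv}=-\operatorname{Re}C_{rv}=\frac{2M}{N}-\operatorname{Re}D_{rv}\le\frac{2(M+s)}{N}$.
\end{itemize}
So the denominator is at most $\frac{4(M+s)}{N}$. Summing \eqref{eq:excess-identity} over the off-axis entries, and using that $\sum(\operatorname{Im}C)^2=\sum(\operatorname{Im}D)^2=q_\tau(B_H)$ with every squared imaginary part supported off the axes, gives
$$\|C\|_1-T\ge\frac{Nq_\tau(B_H)}{4(M+s)}\ge\frac{Na(B_H)}{4(M+s)}.$$
Minimizing over labelings proves the lower bound.

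I expect the only delicate step to be the vanishing of the axis imaginary parts. One has to check that the zero row and column at label $0$ do not break reality. This works because the column-sum function $d(1-\delta_0)$ is even under $y\mapsto -y$, so its transform is real. Everything else is bookkeeping already done in the proofs of Theorems~\ref{thm:cone} and~\ref{thm:stability}.
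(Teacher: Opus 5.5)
Your proposal is correct and follows the paper's proof essentially step for step. Constant weighted degree makes the axis coefficients real ($D_{0v}=-\frac{k}{N}$ for $v\neq0$), so all of $q_\tau(B_H)$ lies off the axes, where $|C_{rv}|+b_{rv}\le\frac{4(M+s)}{N}$; summing the excess identity, minimizing over labelings, and squeezing then gives the lower bound and the limit, exactly as in the paper. The only cosmetic difference is that you quote the upper bound directly from Theorem~\ref{thm:stability}, whereas the paper re-derives it from the same denominator bound $\frac{4(M-s)}{N}$.
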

\end{samepage}
\begin{proof}[\normalfont Proof]
Let the common weighted degree be $k$. In a centered labeling, the degree
vector of the extended source is zero at $0$ and equals $k$ elsewhere.
Consequently, for $v\ne0$,
$$
D_{0v}=\frac{k}{N}\sum_{y\ne0}e^{-\frac{2\pi ivy}{N}}
=-\frac{k}{N}.
$$
Symmetry gives the same assertion on the other axis, and $D_{00}$ is real
as well. Thus all the imaginary energy $q_\tau(B_H)$ lies off the axes.
At such an entry, \eqref{eq:star-transform} and $|D_{rv}|\le\frac{2s}{N}$
give
$$
|C_{rv}|\le\frac{2(M+s)}{N}.
$$
Together with the sign margin, this yields
$$
\frac{4(M-s)}{N}
\le |C_{rv}|+b_{rv}
\le\frac{4(M+s)}{N}.
$$
Use these bounds in \eqref{eq:excess-identity}, sum, and minimize exactly
as in the preceding theorem. This proves \eqref{eq:regular-stability}.
Multiplication by $M$ and the squeeze theorem prove
\eqref{eq:regular-limit}.
\end{proof}

For a simple source we can obtain a stronger gap even when the degrees are
unequal. The reason is that a small number of mismatched edge orbits cannot
place all the imaginary energy on the axes. The degree vector measures
exactly how much energy those axes can carry.

\begin{samepage}
\begin{lemma}\label{lem:axis-energy}
For a centered labeling of a weighted source, let $d$ be its weighted
degree vector on the source vertices, and let $\tau$ be the induced
pairing. Then
\begin{equation}\label{eq:axis-energy}
\sum_{v\in\mathbb{Z}_N}(\operatorname{Im}D_{0v})^2
+\sum_{r\in\mathbb{Z}_N}(\operatorname{Im}D_{r0})^2
=\frac{\|d-P_\tau d\|_2^2}{2N}.
\end{equation}
If the source is simple and $q=q_\tau(B_H)$, then
\begin{equation}\label{eq:degree-defect}
\|d-P_\tau d\|_2^2\le4q^2.
\end{equation}
\end{lemma}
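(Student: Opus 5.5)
We would prove the two assertions separately: the identity \eqref{eq:axis-energy} by a one-dimensional Parseval computation on the axes, and the bound \eqref{eq:degree-defect} by writing $d-P_\tau d$ as a sum of contributions from mismatched edge orbits.

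For \eqref{eq:axis-energy}, let $e$ be the degree vector of the extended source in the labeled coordinates, so $e(0)=0$ and $e(y)$ is the weighted degree of the vertex with label $y$. Summing \eqref{eq:fourier} over $x$ first gives
$$
D_{0v}=\frac{1}{N}\sum_{y}e(y)e^{-\frac{2\pi ivy}{N}}.
$$
Because $e$ is real, the one-variable analogue of Lemma~\ref{lem:reflection} gives
$$
\operatorname{Im}D_{0v}=\frac{1}{2iN}\sum_y\bigl(e(y)-e(-y)\bigr)e^{-\frac{2\pi ivy}{N}}.
$$
Parseval for the unnormalized one-dimensional transform, $\sum_v|\sum_y g(y)e^{-2\pi ivy/N}|^2=N\|g\|_2^2$, then yields
$$
\sum_v(\operatorname{Im}D_{0v})^2=\frac{\|e-Re\|_2^2}{4N}.
$$
By the symmetry of $B$, the sum along the other axis is the same. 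Since $N$ is odd, reflection fixes label $0$ and induces $\tau$ on the source vertices. Hence $\|e-Re\|_2=\|d-P_\tau d\|_2$, and adding the two axis sums gives \eqref{eq:axis-energy}. The term $D_{00}$ is real and appears in both sums, so this double counting is harmless.

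For \eqref{eq:degree-defect}, we would use $d_{\tau u}=\sum_w B_{\tau u,\tau w}$. This gives
$$
(d-P_\tau d)_u=\sum_w(B_H-P_\tau B_HP_\tau)_{uw},
$$
so $d-P_\tau d$ is the vector of row sums of $E=B_H-P_\tau B_HP_\tau$. Next we organize $E$ by the two-element orbits of $\tau$ on unordered pairs. By the proof of Proposition~\ref{prop:projection}, a mismatched orbit $\{\{u,w\},\{\tau u,\tau w\}\}$ contributes exactly $1$ to $q$. In the simple case it contributes $\pm1$ at the entries $(u,w)$ and $(w,u)$, and the opposite sign at $(\tau u,\tau w)$ and $(\tau w,\tau u)$. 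Its contribution to the row-sum vector is therefore
$$
\pm\bigl(\delta_u+\delta_w-\delta_{\tau u}-\delta_{\tau w}\bigr).
$$
This vector has Euclidean norm at most $2$, whether or not any of the four indices coincide. Orbits of size one contribute nothing to $E$.

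Summing over the $q$ mismatched orbits, the triangle inequality gives $\|d-P_\tau d\|_2\le2q$, which is \eqref{eq:degree-defect}. The main point needing care is this step. The naive estimate, which bounds each row sum by the number of nonzero entries in that row, only yields $16q^2$. The orbit decomposition is what recovers the constant $4$: each orbit's contribution has norm at most $2$, rather than contributing up to $4$ spread over rows whose squares then compound.
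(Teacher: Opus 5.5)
Your proof is correct. The first part is the paper's argument: the paper writes $D_{0v}=N^{-1/2}(F_N\widetilde d)_v$ and combines unitarity with the fact that conjugating the transform of a real vector reflects the vector, which is your one-dimensional Parseval computation.

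For the bound $\|d-P_\tau d\|_2^2\le4q^2$ your route differs slightly. The paper first checks that the two pairs of a mismatched orbit have four distinct endpoints; an overlap would make the orbit a single pair, since $\tau$ is fixed-point-free. So each orbit contributes two entries $+1$ and two entries $-1$ to $d-P_\tau d$. This gives $\|d-P_\tau d\|_1\le4q$ and $\|d-P_\tau d\|_\infty\le q$, and the bound follows from $\|v\|_2^2\le\|v\|_1\|v\|_\infty$. You instead apply the triangle inequality in $\ell^2$ to the orbit contributions, each of squared norm $|\{u,w\}\mathbin\triangle\{\tau u,\tau w\}|\le4$.

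Your version is marginally shorter. It also makes explicit the row-sum identity $(d-P_\tau d)_u=\sum_w(B_H-P_\tau B_HP_\tau)_{uw}$, which the paper uses implicitly, and it does not depend on the distinct-endpoint check. The paper's version records the separate $\ell^1$ and $\ell^\infty$ bounds.

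Your closing remark undersells the row-by-row count, though. That count loses a factor of four only if you pass through $\|v\|_2\le\|v\|_1$. Each mismatched orbit changes each coordinate by at most one, so $\|d-P_\tau d\|_\infty\le q$. Together with the $\ell^1$ bound $4q$, this already gives $4q^2$, which is exactly how the paper proceeds.
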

\end{samepage}
\begin{proof}[\normalfont Proof]
Extend $d$ by zero at the cone vertex, and call the resulting vector
$\widetilde d$. Summing first over the row index of $B$ gives
$$
D_{0v}=\frac{1}{N}
\sum_y\widetilde d_y e^{-\frac{2\pi ivy}{N}}
=\frac{1}{\sqrt{N}}(F_N\widetilde d)_v.
$$
For a real vector, conjugation of its Fourier transform replaces the
vector by its reflection. Therefore unitarity gives
$$
\sum_v(\operatorname{Im}D_{0v})^2
=\frac{1}{4N}\|\widetilde d-R\widetilde d\|_2^2
=\frac{1}{4N}\|d-P_\tau d\|_2^2.
$$
The other axis has the same contribution. Their intersection has imaginary
part zero, so adding proves \eqref{eq:axis-energy}.

For a simple source, Proposition~\ref{prop:projection} says that $q$ is
the number of mismatched two-element edge orbits. The two unordered pairs in such an orbit have four distinct endpoints. To check this, write the orbit as $\{u,v\}$ and $\{\tau(u),\tau(v)\}$. An overlap cannot come from a fixed vertex, because $\tau$ is fixed-point-free. It must therefore give $u=\tau(v)$ or $v=\tau(u)$; applying $\tau$ gives the other equality, so the two unordered pairs would coincide. A mismatched orbit contributes two $+1$ entries
and two $-1$ entries to $d-P_\tau d$. Summing the $q$ orbit contributions,
and allowing cancellation, gives
$$
\|d-P_\tau d\|_1\le4q,
\|d-P_\tau d\|_\infty\le q.
$$
The inequality $\|v\|_2^2\le\|v\|_1\|v\|_\infty$ completes the proof.
\end{proof}

\begin{samepage}
\begin{theorem}\label{thm:simple-gap}
Let $H$ be a simple graph of even order $n\ge4$ with $m$ edges and no
fixed-point-free involutory automorphism. If $M>m$ and $N=n+1$, then
\begin{equation}\label{eq:strong-gap}
\Phi(W_M(H))\ge T+\frac{N}{16M},
\operatorname{FR}_{\min}(W_M(H))\ge\frac{T}{Q}+\frac{N}{16MQ},
\end{equation}
where $T=\frac{4Mn^2-4m}{N}$ and $Q=\sqrt{2nM^2+2m}$.
\end{theorem}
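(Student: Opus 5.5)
The plan is to fix an arbitrary centered labeling, let $\tau$ be the induced pairing, and set $q=q_\tau(B_H)$. By Proposition~\ref{prop:projection}, $q$ is the number of mismatched edge orbits. It is therefore a positive integer, since $H$ has no fixed-point-free involutory automorphism. I then aim to show $\|C\|_1-T\ge\frac{N}{16M}$ for this labeling. Since every labeling may be taken centered, minimizing over labelings gives the bound on $\Phi$, and dividing by $Q$ gives the Fourier-ratio statement. The starting point is the exact identity \eqref{eq:excess-identity}. Its terms are nonnegative, and summing it gives $\|C\|_1-T=\sum_{r,v}\frac{(\operatorname{Im}C_{rv})^2}{|C_{rv}|+b_{rv}}$. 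As in Theorem~\ref{thm:stability}, I will use $\operatorname{Im}C=\operatorname{Im}D$ and $\sum_{r,v}(\operatorname{Im}D_{rv})^2=q$.

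I will split into two cases according to the size of $q$. If $q\ge\frac{N}{4}$, the crude lower bound of Theorem~\ref{thm:stability} for a single labeling, \eqref{eq:label-stability}, already gives $\|C\|_1-T\ge\frac{q}{4M}\ge\frac{N}{16M}$. Here $s=m<M$, so the hypothesis $M>s$ of Theorem~\ref{thm:cone} holds.

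If $q<\frac{N}{4}$, I use the sharper denominator available off the axes. For $r,v\ne0$, \eqref{eq:star-transform} and $|D_{rv}|\le\frac{2m}{N}$ give $|C_{rv}|\le\frac{2(M+m)}{N}$. Also $b_{rv}=-\operatorname{Re}C_{rv}\le|C_{rv}|$, so the denominator is at most $\frac{4(M+m)}{N}<\frac{8M}{N}$. The off-axis terms therefore contribute at least $\frac{N}{8M}E_{\mathrm{off}}$, where $E_{\mathrm{off}}$ is the off-axis imaginary energy. The axis terms contribute nonnegatively and can be dropped. Lemma~\ref{lem:axis-energy} bounds the axis imaginary energy by $\frac{4q^2}{2N}=\frac{2q^2}{N}$. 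Hence
$$E_{\mathrm{off}}\ge q-\frac{2q^2}{N}=q\Bigl(1-\frac{2q}{N}\Bigr)\ge\frac{q}{2}\ge\frac12,$$
using $q<\frac{N}{4}$ and $q\ge1$. This yields $\|C\|_1-T\ge\frac{N}{16M}$.

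The main obstacle is the tradeoff between the two regimes. The off-axis argument degrades when $q$ is large, because the axis bound $\frac{2q^2}{N}$ grows quadratically and could absorb all of $q$. The crude bound $\frac{q}{4M}$ is only useful once $q$ is of order $N$. The threshold $q=\frac{N}{4}$ is chosen so that both cases give exactly $\frac{N}{16M}$. The remaining care is to check the constants: the strict inequality $M+m<2M$, and the fact that the integrality $q\ge1$ is what converts $\frac{q}{2}$ into a uniform gap.
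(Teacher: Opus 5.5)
Your proposal is correct and follows essentially the same argument as the paper. It uses the same case split at $q=\frac{N}{4}$, the off-axis denominator bound $|C_{rv}|+b_{rv}<\frac{8M}{N}$ combined with the axis-energy estimate $\frac{2q^2}{N}$ from Lemma~\ref{lem:axis-energy} when $q$ is small, and the per-labeling lower bound $\frac{q}{4M}$ when $q$ is large.
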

\end{samepage}
\begin{proof}[\normalfont Proof]
Fix any centered labeling and put $q=q_\tau(B_H)$. The absence of the
required automorphism implies $q\ge1$. By the reflection identity and
Lemma~\ref{lem:axis-energy}, the imaginary energy away from the axes obeys
$$
\sum_{r,v\ne0}(\operatorname{Im}D_{rv})^2
=q-\frac{\|d-P_\tau d\|_2^2}{2N}
\ge q-\frac{2q^2}{N}.
$$
Suppose first that $q\le\frac{N}{4}$. The last quantity is at least
$\frac{q}{2}\ge\frac{1}{2}$. Off the axes,
$$
|C_{rv}|\le\frac{2(M+m)}{N}<\frac{4M}{N},
$$
and hence $|C_{rv}|+b_{rv}<\frac{8M}{N}$. The contribution of these
entries to \eqref{eq:excess-identity} is therefore at least
$$
\frac{N}{8M}\sum_{r,v\ne0}(\operatorname{Im}D_{rv})^2
\ge\frac{N}{16M}.
$$
Every remaining contribution is nonnegative. If instead
$q>\frac{N}{4}$, the lower bound in \eqref{eq:label-stability} gives
$\|C\|_1-T\ge\frac{q}{4M}>\frac{N}{16M}$. Thus every centered labeling
has the claimed excess. Translation invariance covers all labelings, and
division by $Q$ gives the second assertion.
\end{proof}

\section{Computational complexity}

We now turn the separation between equality and strict inequality into a computational obstruction. The point is to make that separation large enough to detect with polynomially many binary digits. We use the following NP-complete problem: given a simple graph $H$, decide whether it has a
fixed-point-free involutory automorphism. This result is due to Lubiw
\cite{Lubiw}; the precise involutory statement is also recorded as
Theorem 1.2 in \cite{Fiala}. We may restrict to even orders $n\ge4$.
Odd orders are automatically no-instances, and the smaller cases can be
decided directly. Such cases may be sent to $2K_2$ as a fixed yes-instance or to $K_{1,3}$ as a fixed no-instance. Both have order four, so this restriction preserves NP-hardness.

We use the usual binary model of computation. A rational weight is given
by an integer numerator and a positive integer denominator written in
binary. Additive approximation with error $\delta$ means returning a
rational value whose distance from the requested real value is at most
$\delta$. A decision problem belongs to NP when a yes-answer has a certificate that can be checked in polynomial time; it is NP-complete when every problem in NP reduces to it in polynomial time. For the numerical problem below, NP-hardness means that a polynomial-time approximation algorithm would decide the stated NP-complete problem in polynomial time. The reduction makes this implication explicit.

\begin{samepage}
\begin{theorem}\label{thm:hardness}
It is NP-hard to approximate $\operatorname{FR}_{\min}(W)$ to additive
error
$$
\delta_N=\frac{1}{256N^{\frac{7}{2}}}
$$
for a nonzero nonnegative integer weighted adjacency matrix $W$ of order
$N$, with its entries given in binary. This remains true when its support
graph is connected, $N$ is odd, and all positive weights belong to
$\{1,M\}$ for one integer $M\le N^2$. The same obstruction applies to exact computation whenever the exact output permits this additive approximation to be obtained in polynomial time.
\end{theorem}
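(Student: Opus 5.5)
The plan is a polynomial-time reduction from Lubiw's problem, detecting a fixed-point-free involutory automorphism of a simple graph $H$. As noted above, we may assume $H$ has even order $n\ge4$. Let $m=|E(H)|$ and set $M=m+1$, so that $M>m=s$. Output the weighted cone $W=W_M(H)$ of order $N=n+1$.

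\textbf{Properties of the instance.}
\begin{itemize}
\item $N$ is odd and the support graph is connected, since it contains the full star at the cone vertex.
\item The positive weights lie in $\{1,M\}$.
\item $M\le \binom{n}{2}+1\le N^2/2\le N^2$.
\item The entries have $O(\log N)$ bits, so the construction takes polynomial time.
\end{itemize}

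\textbf{The gap.} By Theorem~\ref{thm:cone}, a yes-instance gives $\operatorname{FR}_{\min}(W)=T/Q$. By Theorem~\ref{thm:simple-gap}, a no-instance gives $\operatorname{FR}_{\min}(W)\ge T/Q+g$, where $g=N/(16MQ)$. Here $T=(4Mn^2-4m)/N$ and $Q=\sqrt{2nM^2+2m}$ are explicit.

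Next bound $g$ from below. Since $2m<2M^2$, we have $Q^2\le 2nM^2+2M^2=2NM^2$, so $Q\le M\sqrt{2N}$. Combined with $M\le N^2/2$ this gives
$$
g\ge \frac{N}{16M^2\sqrt{2N}}\ge\frac{N}{4\sqrt2\,N^{4}\sqrt N}=\frac{1}{4\sqrt2\,N^{7/2}}.
$$
This is far larger than $4\delta_N=\frac{1}{64N^{7/2}}$. So an approximation $\alpha$ with $|\alpha-\operatorname{FR}_{\min}(W)|\le\delta_N$ satisfies $\alpha\le T/Q+\delta_N$ in the yes case and $\alpha\ge T/Q+g-\delta_N$ in the no case. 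The two windows are separated by a margin of at least $g-2\delta_N>g/2$.

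\textbf{Deciding without exact irrationals.} The main obstacle is that $T/Q$ is generally irrational, so the comparison must be done in exact rational arithmetic. I would handle this as follows.
\begin{itemize}
\item Compute a rational $\widetilde Q$ with $|\widetilde Q-Q|\le Q\cdot 2^{-k}$, for example by integer square roots of $2^{2k}Q^2$, taking $k=O(\log N)$ bits. Since $T/Q\le 4n^2M/(N\cdot M\sqrt{2n})=O(N^{1/2})$, the relative error $2^{-k}$ moves $T/\widetilde Q$ by less than $g/8$.
\item Set the rational threshold $\theta=T/\widetilde Q+\tfrac12\cdot\frac{1}{4\sqrt2 N^{7/2}}$, with the constant replaced by a slightly smaller rational such as $\frac{1}{6N^{7/2}}\cdot\frac12$. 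Then $\theta$ lies strictly inside the gap: above $T/Q+\delta_N$ and below $T/Q+g-\delta_N$.
\item Answer ``no involution'' exactly when $\alpha>\theta$.
\end{itemize}
This comparison of rationals takes polynomial time. So a polynomial-time $\delta_N$-approximation algorithm would decide Lubiw's NP-complete problem.

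\textbf{Exact computation.} The final sentence of the statement follows immediately. If the exact value is output in any form from which a $\delta_N$-accurate rational can be extracted in polynomial time, that extraction composed with the reduction gives the same contradiction.
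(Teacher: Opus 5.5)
Your proposal is correct and follows the paper's proof essentially step for step: the same cone $W_{m+1}(H)$, Theorem~\ref{thm:cone} for yes-instances and Theorem~\ref{thm:simple-gap} for no-instances, and the bound $Q^2\le 2NM^2$ to get a gap of order $N^{-7/2}$ (your use of $M\le N^2/2$ improves the paper's constant $\frac{1}{16\sqrt2}$ to $\frac{1}{4\sqrt2}$). Both proofs finish with a rational threshold test, which the paper places near $\frac{T}{Q}+4\delta_N$ using rationals $\alpha_0,\beta_0$ rather than at half the gap, and your version of that step needs two small repairs: $\frac{1}{12N^{7/2}}$ is irrational unless $N$ is a perfect square (use, say, $\frac{1}{12N^3\lceil\sqrt N\rceil}$ instead), and the error in $T/\widetilde Q$ must be small relative to the explicit bound $\frac{1}{4\sqrt2N^{7/2}}$ rather than relative to $g$, which can be much larger, though $O(\log N)$ bits of precision still suffice for both.
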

\end{samepage}
\begin{proof}[\normalfont Proof]
Let $H$ be a source graph of even order $n\ge4$ with $m$ edges. Set
$$
N=n+1,
  M=m+1,
  W=W_M(H).
$$
This construction has polynomial size. Its support is connected because
of the new universal vertex, and its positive weights belong to $\{1,M\}$.
Since $m\le\frac{n(n-1)}{2}$, we have $M\le N^2$.
Compute
$$
T=\frac{4Mn^2-4m}{N},
  Q=\sqrt{2nM^2+2m},
 \alpha=\frac{T}{Q}.
$$
Theorem~\ref{thm:cone} gives
$\operatorname{FR}_{\min}(W)=\alpha$ in a yes-instance.
Theorem~\ref{thm:simple-gap} gives
$\operatorname{FR}_{\min}(W)\ge\alpha+g$ in a no-instance, where
$$
g=\frac{N}{16MQ}.
$$
Because $m\le M^2$,
$$
Q^2=2nM^2+2m\le2NM^2.
$$
It follows that
\begin{equation}\label{eq:poly-gap}
g\ge\frac{\sqrt{N}}{16\sqrt{2} M^2}
\ge\frac{1}{16\sqrt{2} N^{\frac{7}{2}}}
>8\delta_N.
\end{equation}

Suppose an approximation algorithm returns $v$ with
$|v-\operatorname{FR}_{\min}(W)|\le\delta_N$. By rational square-root
approximation, we can compute rational numbers $\alpha_0,\beta_0$ in
polynomial time such that
$$
|\alpha_0-\alpha|\le\delta_N,
  3\delta_N\le\beta_0\le5\delta_N.
$$
For the second quantity, approximate $4\delta_N$ to error at most
$\delta_N$. Only $O(\log N)$ digits of precision are needed, in addition
to the polynomial input arithmetic. In a yes-instance,
$v-\alpha_0\le2\delta_N<\beta_0$. In a no-instance,
$$
v-\alpha_0\ge g-2\delta_N>6\delta_N>\beta_0.
$$
Comparing two rational numbers therefore decides the source problem in
polynomial time. This proves the claimed hardness.
\end{proof}

The approximation statement avoids any need to compare sums of algebraic
numbers exactly. It establishes NP-hardness; it does not assert that an
unrestricted exact-threshold problem for the Fourier ratio belongs to NP.
The conclusion about exact computation applies to an exact-output model
from which the stated approximation can be obtained in polynomial time.

\begin{samepage}
\begin{corollary}\label{cor:rational}
The same additive approximation problem is NP-hard for nonnegative
rational weights in $[0,1]$. It remains NP-hard when the positive weights
belong to $\{\frac{1}{M},1\}$ with $M\le N^2$.
\end{corollary}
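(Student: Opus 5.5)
The plan is to use scale invariance from Proposition~\ref{prop:basic}: $\operatorname{FR}_{\min}(cW)=\operatorname{FR}_{\min}(W)$ for every $c>0$. Given an instance $W=W_M(H)$ produced by the reduction in Theorem~\ref{thm:hardness}, I form $W'=\frac{1}{M}W$. Its positive weights are $\frac{1}{M}$ on the source edges and $1$ on the cone edges, so all entries lie in $[0,1]$. The order $N$, the support graph and its connectivity, and the bound $M\le N^2$ all carry over unchanged, and the Fourier ratio is the same, so $\operatorname{FR}_{\min}(W')=\operatorname{FR}_{\min}(W)$.

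Next I check that the transformation is polynomial in the binary model. Each entry of $W'$ is either $0$, $1$, or the rational number $1/M$, whose numerator and denominator have $O(\log N)$ bits. The matrix $W'$ therefore has size polynomial in that of $H$ and can be written down in polynomial time.

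Finally, I argue by reduction. Suppose some algorithm approximates $\operatorname{FR}_{\min}$ to additive error $\delta_N$ on rational inputs with weights in $\{\frac{1}{M},1\}$. Applying it to $W'$ returns $v$ with $|v-\operatorname{FR}_{\min}(W)|\le\delta_N$. From that point the argument of Theorem~\ref{thm:hardness} goes through verbatim: compute $\alpha_0$ and $\beta_0$, compare $v-\alpha_0$ with $\beta_0$, and use the gap \eqref{eq:poly-gap}. This decides whether $H$ has a fixed-point-free involutory automorphism, so the restricted problem is NP-hard, and a fortiori so is the problem for general rational weights in $[0,1]$.

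No step is a genuine obstacle. The only point needing care is to state that the error parameter $\delta_N$ depends only on $N$, which the scaling leaves unchanged, and not on the size of the weights. Scaling therefore preserves both the quantity being approximated and the required accuracy.
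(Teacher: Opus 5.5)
Your proposal is correct and follows the paper's proof: rescale the reduction's instance $W_M(H)$ by $\frac{1}{M}$, use the scale invariance in Proposition~\ref{prop:basic}, and note that the rational input size stays polynomial. Your remarks that $\delta_N$ depends only on $N$ and that the decision step of Theorem~\ref{thm:hardness} carries over verbatim make explicit what the paper leaves implicit.
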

\end{samepage}
\begin{proof}[\normalfont Proof]
Replace each constructed matrix $W$ by $\frac{1}{M}W$. Its Fourier ratio
is unchanged by Proposition~\ref{prop:basic}, and the rational input size
remains polynomial.
\end{proof}

For completeness, we record why a specified labeling can be evaluated to
the precision used in these reductions. The difficulty lies in selecting
the labeling, rather than in evaluating one already supplied.

\begin{samepage}
\begin{lemma}\label{lem:evaluation}
For a nonzero rational weighted adjacency matrix and a specified labeling,
its Fourier ratio can be approximated to additive error $\varepsilon>0$
in time polynomial in the input length and $\log(2+\varepsilon^{-1})$.
\end{lemma}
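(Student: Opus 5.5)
The plan is to compute the Fourier matrix $C=F_NW_\pi F_N$ numerically with controlled precision, and then evaluate $\|C\|_1$ and $\|W\|_2$ separately before dividing. First, I would rescale: multiplying all weights by the common denominator does not change the ratio (Proposition~\ref{prop:basic}), so we may assume integer weights $w_{uv}$ of bit length at most $L$, where $L$ is the input length. Then $\|W\|_2=\sqrt{2t_w}$ with $t_w$ a positive integer. This gives $\|W\|_2\ge\sqrt2$ and $\|W\|_2\le 2^{O(L)}$. We can approximate $\|W\|_2$ to any relative precision $2^{-p}$ by integer square root in time polynomial in $L$ and $p$.

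Next, each entry is
$$
C_{rv}=\frac1N\sum_{x,y}w_{\pi(x)\pi(y)}\,\omega^{-(rx+vy)},\qquad \omega=e^{2\pi i/N}.
$$
I would precompute rational approximations of $\cos(2\pi k/N)$ and $\sin(2\pi k/N)$ for $k\in\mathbb Z_N$, each to absolute error $2^{-p}$. This can be done in time polynomial in $\log N$ and $p$ by a truncated Taylor series with a rational approximation of $\pi$ (Machin's formula, say). Since $\omega^{-(rx+vy)}$ depends only on $rx+vy \bmod N$, no products of approximations are needed. Each $C_{rv}$ is then approximated to absolute error at most $N\max w\cdot 2^{-p+1}\le 2^{O(L)}2^{-p}$. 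There are $N^2\le L^2$ entries, so this costs polynomially many arithmetic operations on numbers of $O(L+p)$ bits.

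For the moduli, $|C_{rv}|=\sqrt{(\operatorname{Re}C_{rv})^2+(\operatorname{Im}C_{rv})^2}$. The map $z\mapsto|z|$ is $1$-Lipschitz, so computing the square root of the rational approximation to additive error $2^{-p}$ keeps the total error per entry at $2^{O(L)}2^{-p}$. This is the one point that needs care, since the square root is not Lipschitz near zero as a function of $|z|^2$. The plan avoids it by bounding error through $|z|$ rather than $|z|^2$: we take a rational $a\ge0$ and produce $\lfloor 2^p\sqrt a\rfloor/2^p$ exactly by integer square root of $\lfloor 2^{2p}a\rfloor$, whose error is at most $2^{-p}$ regardless of the size of $a$. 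Summing the $N^2$ entries then gives $\|C\|_1$ to additive error $E=2^{O(L)}2^{-p}$, and $\|C\|_1\le N\|W\|_2\le 2^{O(L)}$.

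Finally, I would form the quotient of the two approximations. For numerator $A\pm E$ and denominator $B\pm E'$ with $B\ge\sqrt2$ and $E'\le 1/2$, the quotient error is at most $O(E+ A E')=2^{O(L)}2^{-p}$. Choosing $p=cL+\lceil\log_2(2+\varepsilon^{-1})\rceil$ for a suitable constant $c$ makes the error at most $\varepsilon$. We then output a rational rounding of the quotient to $p$ bits. All quantities have $O(L+p)$ bits, and the number of operations is polynomial in $N\le L$, which gives the claimed running time. The main obstacle is the bookkeeping showing that polynomially many bits suffice for the trigonometric constants. The key point there is that $N$ is bounded by the input length, so we need $p$ bits of $N$ fixed constants rather than anything exponential.
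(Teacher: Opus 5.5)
Your proposal is correct and takes essentially the paper's route: a rational $\pi$ and Taylor series give the roots of unity, the additive error propagates linearly through the Fourier sums and then through the $1$-Lipschitz modulus, and a rational square-root step handles the normalization. The differences are bookkeeping only: you clear denominators by scale invariance, divide by $\|W\|_2\ge\sqrt2$ at the end, and absorb a $2^{O(L)}$ error amplification with $O(L)$ extra bits, whereas the paper first normalizes to $W/\|W\|_2$ (using an exponential lower bound on a nonzero rational entry), so its entries have modulus at most one and the accumulated error is the polynomial quantity $3N^3\eta$.
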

\end{samepage}
\begin{proof}[\normalfont Proof]
First compute the rational number $\|W\|_2^2$ and approximate its positive
square root. This allows the entries of $A=\frac{W}{\|W\|_2}$ to be
approximated to any prescribed absolute precision in polynomial time.
Their absolute values are at most one. A nonzero rational entry has size
bounded below by an exponential in the negative input length, so this
normalization requires only polynomially many extra digits.

The Fourier ratio equals $\|F_NAF_N\|_1$. Approximate the normalized
entries to absolute error at most $\eta$ and the roots of unity in
\eqref{eq:fourier} to complex absolute error at most $\eta$.
A rational approximation to $\pi$, followed by finite Taylor series for
sine and cosine on a bounded interval, provides the roots in time polynomial in $\log(2+\eta^{-1})$
and $\log N$. Each Fourier coefficient is a sum of $N^2$ terms divided by
$N$. If $|a|\le1$ and $|\zeta|=1$, approximations $a_0,\zeta_0$ with these errors satisfy
$$
|a\zeta-a_0\zeta_0|
\le |a-a_0|+|a_0||\zeta-\zeta_0|
\le 2\eta+\eta^2\le3\eta
$$
when $\eta\le1$. The resulting error in one Fourier coefficient is therefore at most $3N\eta$. Taking an absolute value does not increase an additive
complex error. Summing over $N^2$ frequencies gives error at most
$3N^3\eta$, apart from the final square-root evaluations of the moduli.
Approximate each of those moduli to error at most
$\frac{\varepsilon}{2N^2}$, and choose
$\eta\le\min\{1,\frac{\varepsilon}{6N^3}\}$. The total error is at most
$\varepsilon$. All sums, products, and square-root approximations have
polynomial bit complexity at this precision.
\end{proof}

\begin{samepage}
\begin{corollary}\label{cor:fptas}
Unless $\mathrm{P}=\mathrm{NP}$, the weighted labeling minimization
problem has no fully polynomial-time approximation scheme.
\end{corollary}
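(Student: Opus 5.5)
We will argue by contradiction, reducing an FPTAS to the additive approximation ruled out by Theorem~\ref{thm:hardness}. First we need to fix what the minimization problem and an FPTAS mean here. We take the weighted labeling minimization problem to be the task of computing $\Phi(W)$ (equivalently $\operatorname{FR}_{\min}(W)$, since $\|W\|_2$ is labeling-independent) for a nonnegative rational weighted adjacency matrix. An FPTAS returns a value within a factor $1\pm\varepsilon$ of the optimum in time polynomial in the input length and $\varepsilon^{-1}$. If the scheme outputs a labeling rather than a value, we first apply Lemma~\ref{lem:evaluation} to that labeling to obtain its ratio to any needed additive precision.

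The key step is converting relative error into additive error on the instances from the proof of Theorem~\ref{thm:hardness}. On those cones $W=W_M(H)$ with $N$ odd and $M\le N^2$, Proposition~\ref{prop:basic} gives $\operatorname{FR}_{\min}(W)\le N$. A relative error $\varepsilon$ therefore yields additive error at most $\varepsilon N$. We choose $\varepsilon=\delta_N/(2N)=1/(512N^{9/2})$, so that $\varepsilon^{-1}$ is polynomial in $N$ and the FPTAS runs in polynomial time. This gives an additive approximation within $\delta_N/2$. Adding the evaluation error from Lemma~\ref{lem:evaluation}, also at most $\delta_N/2$, keeps the total error within $\delta_N$.

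If the scheme is phrased for $\Phi$ rather than the ratio, we use the same argument with $\Phi(W)\le N\|W\|_2$. The additive error in $\Phi$ is at most $\varepsilon N\|W\|_2$. Dividing by a rational approximation of $Q=\|W\|_2$ turns this into an additive error of about $\varepsilon N$ in $\operatorname{FR}_{\min}$, and the choice of $\varepsilon$ above still works. Theorem~\ref{thm:hardness} then shows that such an algorithm decides the existence of a fixed-point-free involutory automorphism in polynomial time, which forces $\mathrm{P}=\mathrm{NP}$.

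The main obstacle is the bookkeeping that makes the two error sources combine to at most $\delta_N$ while $\varepsilon^{-1}$ remains polynomial. The bound $\operatorname{FR}_{\min}\le N$ is what keeps the relative-to-additive conversion within a polynomial factor, so the remaining work is just confirming the constants.
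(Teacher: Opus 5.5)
Your proposal is correct and follows the paper's proof. It converts the $(1+\varepsilon)$ guarantee into additive error $\varepsilon N$ via $\operatorname{FR}_{\min}(W)\le N$ from Proposition~\ref{prop:basic}, evaluates the returned labeling with Lemma~\ref{lem:evaluation}, and contradicts Theorem~\ref{thm:hardness}. The only cosmetic point is that $\delta_N/(2N)=1/(512N^{9/2})$ is generally irrational, so you would pass the scheme and the evaluation lemma a rational value below it, for example the paper's choice $\varepsilon=1/(1024N^5)$ with evaluation error $1/(1024N^4)$, which gives total error $1/(512N^4)<\delta_N$.
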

\end{samepage}
\begin{proof}[\normalfont Proof]
Such a scheme would return a labeling of value at most
$(1+\varepsilon)\operatorname{FR}_{\min}(W)$ in time polynomial in the
input length and $\varepsilon^{-1}$. Take
$\varepsilon=\frac{1}{1024N^5}$. Since
$\operatorname{FR}_{\min}(W)\le N$, the returned value would exceed the
minimum by at most $\frac{1}{1024N^4}$. Use
Lemma~\ref{lem:evaluation} to evaluate it with the same additive error.
The resulting rational number would approximate the minimum to error at
most $\frac{1}{512N^4}$, which is smaller than $\delta_N$. The running
time would still be polynomial, contradicting Theorem~\ref{thm:hardness}
unless $\mathrm{P}=\mathrm{NP}$.
\end{proof}

\begin{remark}\label{rem:multigraph}
An integer-weighted graph can be viewed as a loopless multigraph by
replacing an edge of weight $M$ with $M$ parallel edges. In our reduction,
the total multiplicity is $nM+m=O(n^3)$. Thus the hardness statement also
holds for explicitly represented multigraphs, with edge complexity defined
from the multiplicity matrix. Passing to a simple graph requires a further
argument, because replacing parallel edges by a gadget changes the vertex
set and its possible labelings.
\end{remark}

\section{The unweighted problem}

What remains if all positive weights must equal one? The reflection identity still applies, so we can ask how many edge changes are needed before a real Fourier labeling becomes possible. This question has an exact combinatorial answer. The difficulty is then to understand its relation to minimizing the full Fourier norm. To see what survives, let $A$ be the adjacency
matrix of a simple graph $G$ of order $N$ and define
\begin{equation}\label{eq:eta}
\eta(G)=\min_\pi\|\operatorname{Im}(F_NA_\pi F_N)\|_2^2.
\end{equation}
The empty graph is allowed here, since there is no denominator.

\begin{samepage}
\begin{proposition}\label{prop:eta-edits}
For $N\ge3$, let $d=\gcd(2,N)$. Then $\eta(G)$ is the minimum number of
edge additions or deletions required to obtain, on the same vertex set, a
simple graph having an involutory automorphism with exactly $d$ fixed
vertices. In particular, $\eta(G)$ is a nonnegative integer.
\end{proposition}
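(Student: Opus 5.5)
Fix a labeling $\pi$ and write $A_\pi$ for the labeled adjacency matrix. By the identity \eqref{eq:imag-identity} in Lemma~\ref{lem:reflection},
$$
\|\operatorname{Im}(F_NA_\pi F_N)\|_2^2=\frac14\|A_\pi-RA_\pi R\|_2^2 .
$$
Transport the reflection $x\mapsto -x$ back to the vertex set through $\pi$. This gives an involution $\rho_\pi=\pi\circ(x\mapsto -x)\circ\pi^{-1}$ of $V$. Its fixed points are the images of the solutions of $2x=0$, so it has exactly $d=\gcd(2,N)$ fixed vertices. In these terms the quantity above equals $\frac14\|A-P_{\rho}AP_{\rho}\|_2^2$ for the unlabeled adjacency matrix $A$.

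The plan has two steps. First, reuse the orbit computation from the proof of Proposition~\ref{prop:projection}. That computation never used fixed-point-freeness, so it works for any involution $\rho$. Let $\rho$ act on unordered pairs. A one-element orbit contributes $0$. A two-element orbit with weights $a,b\in\{0,1\}$ contributes $(a-b)^2$ after the division by four. Hence
$$
\frac14\|A-P_\rho AP_\rho\|_2^2=\frac12|E(G)\mathbin\triangle\rho(E(G))|,
$$
which is the number of mismatched two-element orbits. Since orbits can be repaired independently, this number is the minimum number of edge edits that make $\rho$ an automorphism. A single edit changes membership of only one pair, so it can fix at most one orbit; this gives the lower bound. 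Adding or deleting one edge per mismatched orbit gives the upper bound.

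Second, minimize over labelings. As $\pi$ ranges over all bijections, $\rho_\pi$ ranges over every involution of $V$ with exactly $d$ fixed points. For the converse, given such an involution, label its fixed vertices by the solutions of $2x=0$ and label each transposed pair by opposite residues $\{j,-j\}$, exactly as in the discussion preceding Theorem~\ref{thm:cone}. This uses $N\ge3$ only to make the counting of fixed points and pairs consistent. It follows that
$$
\eta(G)=\min_{\rho}\ \min\#\{\text{edits making }\rho\text{ an automorphism}\},
$$
with the outer minimum over involutions having exactly $d$ fixed points.

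It remains to check that this double minimum equals the minimum number of edits needed to reach some graph $G'$ with such an involutory automorphism. This is the only step needing care, though it is short. If $G'$ is obtained by $k$ edits and $\rho$ is an automorphism of $G'$ with $d$ fixed points, then $k$ is at least the minimal repair count for $\rho$. Conversely, the optimal repair for $\rho$ produces such a $G'$. Integrality is then immediate, since each term is a count of orbits. The empty graph needs no special handling, because no normalization occurs.
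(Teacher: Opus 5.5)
Your proposal is correct and follows the paper's proof essentially step for step. You use the reflection identity, note that relabelings induce exactly the involutions with $d$ fixed vertices (realized by giving transposed pairs opposite residues), and apply the orbit count of Proposition~\ref{prop:projection} to involutions with fixed points, with one edit per mismatched orbit; you are somewhat more explicit than the paper about the lower bound on edits and the interchange of minima, and the one harmless slip is that the counts of $d$ fixed points and $(N-d)/2$ pairs match for every $N$, so that step does not actually rely on $N\ge3$.
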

\end{samepage}
\begin{proof}[\normalfont Proof]
As the labeling varies, reflection induces all involutions with exactly
$d$ fixed vertices. Each such involution is conjugate to reflection by a
labeling, since its fixed points and transposed pairs have the same sizes.
For a fixed involution, the reflection identity gives one fourth of the
squared adjacency difference. The edge-orbit calculation in
Proposition~\ref{prop:projection} identifies this with the number of
mismatched two-element orbits. That calculation also applies when the
involution has fixed vertices: an orbit of an unordered pair still has
one or two elements. One edit repairs each mismatched orbit. Taking the
minimum proves the assertion.
\end{proof}

\begin{samepage}
\begin{theorem}\label{thm:reality-hard}
Deciding whether $\eta(G)=0$ is NP-complete, even for connected simple
graphs of odd order with a unique universal vertex.
\end{theorem}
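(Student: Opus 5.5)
Membership in NP is the easy half. By Proposition~\ref{prop:eta-edits} (equivalently, by the remark after Lemma~\ref{lem:reflection}), $\eta(G)=0$ holds exactly when $G$ has an involutory automorphism with exactly $\gcd(2,N)$ fixed vertices. A certificate is therefore such a permutation $\tau$. To check it, we verify $\tau^2=\operatorname{id}$, count its fixed points, and confirm that $\tau$ preserves the edge set. Each of these checks takes polynomial time. No Fourier arithmetic is needed, since the reflection identity turns the analytic condition into this purely combinatorial one.

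For hardness we reduce from Lubiw's problem, restricted as in Section~5 to simple sources $H$ of even order $n\ge4$. Given $H$, we form the simple cone $G=H+c$ by adding a vertex $c$ adjacent to all of $H$, so $N=n+1$ is odd. We then need $c$ to be the unique universal vertex, which fails if $H$ already has a vertex of degree $n-1$. To fix this, we first replace $H$ by $H\sqcup H$, two disjoint copies of order $2n$. A vertex of $H\sqcup H$ has degree at most $n-1<2n-1$, so no vertex of $H\sqcup H$ is universal in the cone. It remains to show that this replacement preserves the answer. If $H$ has a fixed-point-free involutory automorphism $\sigma$, then $\sigma\sqcup\sigma$ is one for $H\sqcup H$. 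Conversely, for arbitrary $H$ the swap of the two copies is always a fixed-point-free involution of $H\sqcup H$, so this doubling would trivialize the problem and cannot be used.

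Instead, we would use complements. We replace $H$ by its complement $\overline H$ whenever $H$ has a vertex of degree $n-1$. Automorphisms of $H$ and $\overline H$ coincide, and a universal vertex of $H$ is isolated in $\overline H$. The trouble is that $\overline H$ may itself contain a universal vertex, which happens when $H$ has an isolated vertex as well. In that case, we would argue that each automorphism of $H$ maps the set of universal vertices to itself and the set of isolated vertices to itself. So when $H$ has both a universal vertex and an isolated vertex, the instance reduces to the induced subgraph on the remaining vertices together with a parity bookkeeping on those two classes. Concretely, a fixed-point-free involution must pair universal vertices among themselves and isolated vertices among themselves, so each class must have even size. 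If a class is even, we delete it and recurse on fewer vertices; if it is odd, we output a fixed no-instance. After finitely many steps we reach a source with no universal vertex, or a trivial case that we map to $2K_2$ or $K_{1,3}$ as in Section~5. The vertex $c$ is then the unique universal vertex of $G$, and $G$ is connected.

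Correctness of the reduction is then the equality argument from Theorem~\ref{thm:cone} with weights replaced by $1$. Let $\tau$ be an involutory automorphism of $G$ with exactly one fixed point. Since $c$ is the unique universal vertex, $\tau$ must fix $c$, so $\tau$ restricts to a fixed-point-free involutory automorphism of $H$. Conversely, a fixed-point-free involutory automorphism of $H$ extends to $G$ by fixing $c$. By Proposition~\ref{prop:eta-edits}, this gives $\eta(G)=0$ exactly when $H$ is a yes-instance.

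The main obstacle I expect is enforcing the uniqueness of the universal vertex while preserving the answer, that is, handling the recursion correctly when $H$ has universal or isolated vertices. The rest of the argument is routine.
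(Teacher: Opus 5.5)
Your proposal is correct, but the hardness reduction takes a different route from the paper's.

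\textbf{The paper's construction.} The paper uses the uniform graph $G=K_1\vee(H\sqcup 2K_1)$. Padding with two isolated vertices makes $c$, of degree $n+2$, the unique universal vertex for every source, since the old vertices have degree at most $n$. The price comes in the converse. The induced fixed-point-free involution on $H\sqcup 2K_1$ may pair the two new vertices with isolated vertices of $H$. One must therefore count isolated vertices ($k+2$ even, so $k$ even) and re-pair them inside $H$.

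\textbf{Your construction.} You keep the source's vertex set and cone over $H$, or over $\overline{H}$ when $H$ has a vertex of degree $n-1$; $\overline{H}$ has the same automorphisms. An involutory automorphism of the cone with exactly one fixed point must fix $c$. It then restricts directly to a fixed-point-free involutory automorphism of the source, with no parity bookkeeping. The output also has order $n+1$ rather than $n+3$. What your route gives up is uniformity: it needs a case distinction.

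\textbf{Unnecessary recursion.} Your recursion for the troublesome case is valid but vacuous. For $n\ge2$, a universal vertex is adjacent to every other vertex, so $H$ can never have both a universal vertex and an isolated vertex. Hence $\overline{H}$ has no universal vertex whenever $H$ has one, and the complement step alone suffices.

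\textbf{Misplaced citation.} Attributing correctness to the equality argument of Theorem~\ref{thm:cone} ``with weights replaced by $1$'' is misplaced. With unit weights its sign condition can fail, as Example~\ref{ex:sign-failure} shows. Your correctness paragraph actually relies only on Proposition~\ref{prop:eta-edits}, which is exactly what is needed.

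\textbf{NP membership.} Your certificate, the involution itself, is equivalent to the paper's certificate, a labeling checked against $A_\pi(x,y)=A_\pi(-x,-y)$.
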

\end{samepage}
\begin{proof}[\normalfont Proof]
Let $H$ be a source graph of even order $n\ge4$. Add two isolated vertices
to $H$, then add a vertex $c$ adjacent to every vertex of this disjoint
union. In graph notation, the result is
$$
G=K_1\vee(H\sqcup2K_1),
$$
where $\sqcup$ denotes disjoint union and $\vee$ denotes the join. Its
order is $n+3$, which is odd. The vertex $c$ has degree $n+2$ and is the
unique universal vertex. The old vertices have degree at most $n$, and
the two additional vertices have degree one.

If $H$ has a fixed-point-free involutory automorphism, extend it by
exchanging the two added isolated vertices and fixing $c$. Assign $c$
label zero and every transposed pair opposite labels. The reflection
identity gives $\eta(G)=0$.

Conversely, a real Fourier labeling induces an involutory automorphism
with exactly one fixed vertex. Every automorphism fixes $c$, since it is
the unique universal vertex. The induced action on $H\sqcup2K_1$ is
therefore fixed-point-free. It preserves the nonisolated vertices of $H$,
whose degrees within the disjoint union are positive. If $H$ has $k$
isolated vertices, then all $k+2$ isolated vertices in that union are
paired, so $k$ is even. Retain the induced action on the nonisolated
vertices and pair the original isolated vertices among themselves. This
is a fixed-point-free involutory automorphism of $H$.

The construction proves NP-hardness. Membership in NP does not require
numerical Fourier calculations. A labeling is a certificate, and the
identities $A_\pi(x,y)=A_\pi(-x,-y)$ can be checked in $O(N^2)$ operations.
\end{proof}

It is tempting to use this recognition theorem to obtain hardness of the
full Fourier norm on simple graphs. The next example explains the first
obstruction. Reality alone does not prescribe the signs needed in the
cone proof.

\begin{example}\label{ex:sign-failure}
Take the path $H=P_4$, assign its successive vertices labels $1,2,3,4$
in $\mathbb{Z}_5$, and add a universal vertex of label $0$ with weight
$M=1$. This labeling is invariant under reflection, so its Fourier matrix
is real. Direct substitution in \eqref{eq:fourier} gives
$$
(F_5WF_5)_{22}=\frac{\sqrt{5}-1}{5}>0.
$$
The sign prescribed by the cone argument at this entry is negative.
Summing the absolute values gives
$$
\|F_5WF_5\|_1=\frac{48+4\sqrt{5}}{5}>\frac{52}{5}.
$$
Here $\frac{52}{5}$ is the expression $T$ obtained by inserting $M=1$ and $m=3$. The signed-sum identity remains valid, but the assumption $M>m$ is absent. Some real coefficients now have the wrong signs, so reality no longer gives equality.
\end{example}

There is a second obstruction. Even when a real Fourier labeling exists,
minimizing the full norm may require leaving that class of labelings.

\begin{samepage}
\begin{proposition}\label{prop:counterexample}
There is a connected simple graph with a real Fourier labeling for which
every minimizing labeling has a nonreal Fourier matrix.
\end{proposition}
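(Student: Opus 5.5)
The proof will be by explicit example, as the abstract suggests: a seven-vertex connected simple graph $G$. The plan has three parts: choose $G$, verify that it has a real Fourier labeling, and then certify that the global minimum of $\|F_7A_\pi F_7\|_1$ is strictly smaller than the minimum over real Fourier labelings.

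Since $N=7$ is odd, the discussion after Lemma~\ref{lem:reflection} says that a real Fourier labeling exists exactly when $G$ has an involutory automorphism with one fixed vertex. So I would pick $G$ with such an involution and check it by exhibiting the labeling: the fixed vertex gets label $0$ and the three transposed pairs get $\{j,-j\}$. Both connectivity and the symmetry check are finite inspections.

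Next I would reduce the search space using Proposition~\ref{prop:basic}. The affine group $x\mapsto ax+b$ of $\mathbb{Z}_7$ has order $42$ and preserves the Fourier norm, so at most $7!/42=120$ labeling classes need to be evaluated. Automorphisms of $G$ cut the count further, since relabeling by an automorphism gives the same matrix.

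I would then compute the real part of the optimization. Among these classes, the real Fourier labelings are exactly those with $A_\pi=RA_\pi R$ by \eqref{eq:real-iff}. For each real class the entries of $F_7A_\pi F_7$ lie in $\mathbb{Q}(\cos\frac{2\pi}{7})$, so its $\ell^1$ norm is an explicit algebraic number. Let $\rho$ be the minimum of these norms.

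The final step is to exhibit one labeling $\pi_*$ whose Fourier matrix is not real and satisfies $\|F_7A_{\pi_*}F_7\|_1<\rho$. This immediately shows that every minimizer is nonreal: any minimizer has norm at most this value, which is below $\rho$, and every real labeling has norm at least $\rho$. So only one nonreal labeling is needed, and there is no need to enumerate all of them.

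The main obstacle is proving the strict inequality rigorously. The moduli of the nonreal entries are square roots of elements of $\mathbb{Q}(\zeta_7)$, so numerical evaluation alone is not a proof.

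To handle this, I would first use a high-precision computation to locate a candidate gap, then turn it into an exact certificate. One option is to give rational lower bounds on $\rho$, for instance via signed sums $\sum\sigma_{rv}C_{rv}\le\|C\|_1$ with the correct signs as in the cone proof. The other is to give rational upper bounds for each $|C_{rv}|$ at $\pi_*$, obtained by exhibiting rational $u\ge|C_{rv}|$ with $u^2-|C_{rv}|^2\ge0$ checked via interval bounds on $\cos\frac{2\pi k}{7}$. Summing these bounds separates the two values. This is the kind of exact rational certificate the paper defers to the appendix.

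If the gap turns out too small for crude bounds, I would tighten the rational enclosures of $\cos\frac{2\pi}{7}$, which satisfies $8x^3+4x^2-4x-1=0$, until the separation holds.
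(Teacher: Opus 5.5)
Your outline has the right logical skeleton. One nonreal labeling with norm strictly below the minimum $\rho$ over all real labelings forces every minimizer to be nonreal, which is exactly how the paper argues, and your certification plan is in the spirit of the appendix. But you never name a graph, so nothing has been proved. The proposition is an existence statement, and its entire content is a specific graph together with a certified strict inequality. ``Pick $G$ with such an involution, then compute'' gives no reason that any seven-vertex graph works, and many do not: a circulant labeling is reflection-invariant, hence real, and it attains the energy bound of Proposition~\ref{prop:basic}, so every circulant graph has a real minimizer.

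The missing ingredient is the paper's example: a universal vertex $c$, two disjoint triangles, and one edge joining them (seven vertices, thirteen edges). Real labelings come from exchanging the triangles. The paper then reduces the real side to four types by structure rather than enumeration. Since $c$ is the unique universal vertex, it must be the single fixed point of the induced involution and receive label $0$. The bridge endpoints are the only vertices of degree four, so they get opposite labels, normalized to $1,6$ by a unit multiplication. Finally, the triangle containing $1$ must contain one label from $\{2,5\}$ and one from $\{3,4\}$. All four types have norm above $22$. The labeling with triangles $\{1,2,3\},\{4,5,6\}$ and bridge $\{1,4\}$ has norm in $(21.953290,21.953291)$, which the appendix certifies with rational roots of unity and integer square roots.

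Two steps of your reduction would also need repair once a graph is fixed. First, realness is not an invariant of the affine classes you enumerate. Multiplication by a unit commutes with $x\mapsto-x$ and preserves realness. A translation, however, multiplies the Fourier matrix by the phases $e^{-2\pi ib(r+v)/7}$ and generally destroys it. So a class must count as real when it contains a labeling invariant under some reflection $x\mapsto 2b-x$. Testing a single representative per class can miss such classes and overestimate $\rho$. This is why the paper first places $c$ at $0$ and only then uses unit multiplications.

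Second, if your lower bound on the real side uses the fixed cone sign pattern $\sigma_{rv}$, it is useless here. With $M=1$ that signed sum is the labeling-independent value $T=\frac{116}{7}\approx16.57$, far below $21.95$; compare Example~\ref{ex:sign-failure}. Each real type must instead be bounded using its own signs, or, as in the appendix, by two-sided rational enclosures of every modulus.
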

\end{samepage}
\begin{proof}[\normalfont Proof]
Let $G$ consist of a universal vertex $c$, two disjoint triangles, and one
additional edge joining a vertex of the first triangle to a vertex of the
second. It has seven vertices and thirteen edges. Exchanging the two
triangles appropriately gives an involution fixing only $c$, so real
Fourier labelings exist.

We first describe all real labelings up to changes that preserve the norm.
The unique universal vertex must have label $0$, since it is fixed by
every automorphism and reflection in $\mathbb{Z}_7$ has only one fixed
label. The two endpoints of the joining edge are exactly the vertices of
degree four, so they must receive opposite labels. Multiplication by a
nonzero residue lets us give them labels $1$ and $6$. Reflection then
exchanges the two triangles. The triangle containing $1$ must contain
one member of $\{2,5\}$ and one member of $\{3,4\}$. These choices give
exactly four labeled edge sets. They are listed in
Table~\ref{tab:real-types}; the other triangle is always the negative of
the listed one, and the joining edge is $\{1,6\}$.

\begin{table}[htbp]
\centering
\begin{tabular}{cc}
\toprule
Triangle containing $1$ & Rational interval for $\|F_7A F_7\|_1$\\
\midrule
$\{1,2,3\}$ & $(22.481383,22.481384)$\\
$\{1,2,4\}$ & $(25.714285,25.714286)$\\
$\{1,3,5\}$ & $(22.249490,22.249491)$\\
$\{1,4,5\}$ & $(22.057990,22.057991)$\\
\bottomrule
\end{tabular}
\caption{The four real labeling types after normalizing the joining edge. The terminating decimal endpoints denote exact rational numbers.}
\label{tab:real-types}
\end{table}

Now keep $c$ at $0$, put the triangles at $\{1,2,3\}$ and $\{4,5,6\}$,
and use joining edge $\{1,4\}$. Its Fourier matrix is nonreal: reflection
would send its joining edge to the absent edge $\{3,6\}$. Its norm lies
in the rational interval
$$
(21.953290,21.953291),
$$
which is below $22$. Appendix~\ref{app:certificate} proves all five
interval inclusions using rational arithmetic, with the full certificate
included there. Thus this single nonreal labeling has smaller norm than
every real labeling. All Fourier ratios have the same denominator
$\sqrt{26}$, so the same strict comparison holds for the ratios.
Figure~\ref{fig:seven-vertex} displays the comparison.
\end{proof}

\begin{figure}[htbp]
\centering
\begin{tikzpicture}[scale=0.77,
 vertex/.style={circle,draw,fill=white,minimum size=6.5mm,inner sep=1pt},
 graph edge/.style={line width=0.8pt},
 universal edge/.style={draw=black!35,line width=0.55pt}]
\begin{scope}
\node at (0,3.15) {Real Fourier labeling};
\coordinate (c) at (0,2.25);
\coordinate (a) at (-0.55,0);
\coordinate (b) at (0.55,0);
\coordinate (u) at (-2.3,1.1);
\coordinate (v) at (-2.3,-1.1);
\coordinate (w) at (2.3,1.1);
\coordinate (z) at (2.3,-1.1);
\draw[universal edge] (c)--(a) (c)--(b) (c)--(u)
 (c)--(v) (c)--(w) (c)--(z);
\draw[graph edge] (a)--(u)--(v)--(a)--(b)--(w)--(z)--(b);
\node[vertex] at (c) {$0$};
\node[vertex] at (a) {$1$};
\node[vertex] at (b) {$6$};
\node[vertex] at (u) {$4$};
\node[vertex] at (v) {$5$};
\node[vertex] at (w) {$3$};
\node[vertex] at (z) {$2$};
\node at (0,-1.9) {$\|F_7AF_7\|_1>22$};
\end{scope}
\begin{scope}[xshift=7.2cm]
\node at (0,3.15) {Nonreal Fourier labeling};
\coordinate (c) at (0,2.25);
\coordinate (a) at (-0.55,0);
\coordinate (b) at (0.55,0);
\coordinate (u) at (-2.3,1.1);
\coordinate (v) at (-2.3,-1.1);
\coordinate (w) at (2.3,1.1);
\coordinate (z) at (2.3,-1.1);
\draw[universal edge] (c)--(a) (c)--(b) (c)--(u)
 (c)--(v) (c)--(w) (c)--(z);
\draw[graph edge] (a)--(u)--(v)--(a)--(b)--(w)--(z)--(b);
\node[vertex] at (c) {$0$};
\node[vertex] at (a) {$1$};
\node[vertex] at (b) {$4$};
\node[vertex] at (u) {$2$};
\node[vertex] at (v) {$3$};
\node[vertex] at (w) {$5$};
\node[vertex] at (z) {$6$};
\node at (0,-1.9) {$\|F_7AF_7\|_1<22$};
\end{scope}
\end{tikzpicture}
\caption{Two labelings of the same seven-vertex graph. The gray edges
join the universal vertex to every other vertex. The left labeling is
invariant under negation modulo seven and has the smallest norm among
the four real types. The right labeling has smaller norm but lacks that
symmetry. Both Fourier ratios have denominator $\sqrt{26}$.}
\label{fig:seven-vertex}
\end{figure}

The classification also explains the count of real labelings. If the
vertices of the abstract graph are distinguished, one chosen endpoint of
the joining edge has six possible nonzero labels, and its partner is then
forced. There are four choices of the triangle containing it, followed by
two assignments of the remaining vertices in each triangle. Thus there
are $6\cdot4\cdot2\cdot2=96$ centered real labelings. The proof only needs
the four resulting norm types and one competing labeling; it does not
require the unrestricted minimum to be computed.

\FloatBarrier

A different possible approach is to replace large weights by extra vertices. Before attempting this, it is useful to understand an ordinary uniform blow-up. The next proposition shows what happens for one natural labeling. It also explains why this operation alone does not give an unweighted hardness reduction.

Let $W^{(k)}$ be the uniform independent-set blow-up of a nonzero weighted
adjacency matrix $W$. Each vertex is replaced by $k$ independent vertices,
and each pair of vertex classes receives all the edges of the original
weight. The weights themselves are retained in this operation.

\begin{samepage}
\begin{proposition}\label{prop:blowup}
For every positive integer $k$,
$$
\operatorname{FR}_{\min}(W^{(k)})
\le\operatorname{FR}_{\min}(W).
$$
If $\operatorname{FR}_{\min}(W)=\frac{\mathcal E(W)}{\|W\|_2}$, then
equality holds.
\end{proposition}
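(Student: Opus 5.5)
The plan is to prove the inequality with one explicit labeling of the blow-up, and the equality case with the energy lower bound of Proposition~\ref{prop:basic}.

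\emph{Inequality.} Let $\pi$ be a minimizing labeling for $W$, and write $f=f_\pi$ for its labeled matrix on $\mathbb{Z}_N$. The blow-up $W^{(k)}$ has order $kN$. We label its vertices by $\mathbb{Z}_{kN}$ so that the $j$-th copy ($0\le j<k$) of the vertex labeled $x$ receives the label $x+Nj$. In this labeling the labeled matrix is
$$
g(x+Nj,\,y+Nl)=f(x,y).
$$
This holds both for two vertices of the same class, since $f$ has zero diagonal and each class is independent, and for vertices of different classes. Thus $g$ is $N$-periodic in each coordinate. We substitute into \eqref{eq:fourier} with modulus $kN$:
$$
\widehat g(r,v)=\frac{1}{kN}\sum_{x,y}f(x,y)\,e^{-\frac{2\pi i(rx+vy)}{kN}}\sum_{j}e^{-\frac{2\pi i rj}{k}}\sum_{l}e^{-\frac{2\pi i vl}{k}}.
$$
The inner sums over $j$ and $l$ vanish unless $k\mid r$ and $k\mid v$, and then each equals $k$. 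Writing $r=kr'$ and $v=kv'$, this gives
$$
\widehat g(kr',kv')=k\,\widehat f(r',v'),
$$
and $\widehat g$ is zero at all other frequencies. Hence $\|\widehat g\|_1=k\|\widehat f\|_1$. Each entry of $W$ is repeated $k^2$ times in $W^{(k)}$, so $\|W^{(k)}\|_2=k\|W\|_2$, consistent with Parseval. The Fourier ratio of this labeling therefore equals $\operatorname{FR}_{\min}(W)$, and minimizing over labelings of $W^{(k)}$ gives the inequality.

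\emph{Equality.} Suppose $\operatorname{FR}_{\min}(W)=\mathcal E(W)/\|W\|_2$. Up to a simultaneous permutation, which does not change the energy, $W^{(k)}=W\otimes J_k$, where $J_k$ is the all-ones $k\times k$ matrix. The eigenvalues of $J_k$ are $k$ once and $0$ with multiplicity $k-1$. So the eigenvalues of $W^{(k)}$ are $k\lambda_j(W)$ together with zeros, and $\mathcal E(W^{(k)})=k\,\mathcal E(W)$. The middle inequality of \eqref{eq:energy}, applied to $W^{(k)}$, then gives
$$
\operatorname{FR}_{\min}(W^{(k)})\ge\frac{k\,\mathcal E(W)}{k\|W\|_2}=\operatorname{FR}_{\min}(W).
$$
Combined with the first part, this is equality.

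The only step needing care is the Fourier computation, specifically checking that the within-class entries of the labeled blow-up are zero, so that $g$ really is the periodic extension of $f$. This holds because $W$ has zero diagonal and each class is independent.
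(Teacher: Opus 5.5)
Your proof is correct and matches the paper's argument step for step. It uses the same periodic labeling of the blow-up on $\mathbb{Z}_{kN}$, the same geometric-sum computation giving $\widehat g(kr',kv')=k\,\widehat f(r',v')$, and the same equality argument via permutation-similarity to $W\otimes J_k$ and the energy bound (your explicit check that within-class entries vanish because $W$ has zero diagonal is a detail the paper leaves implicit).
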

\end{samepage}
\begin{proof}[\normalfont Proof]
Choose a minimizing labeling of $W$ on $\mathbb{Z}_N$. On $\mathbb{Z}_{kN}$,
label its blow-up by
$$
W^{(k)}(x,y)=W(x\bmod N,y\bmod N).
$$
Write $x=u+aN$ and $y=w+bN$, with $u,w\in\mathbb{Z}_N$ and
$0\le a,b<k$. The two sums over $a,b$ are geometric sums. Each vanishes
unless the corresponding frequency is divisible by $k$, in which case it
equals $k$. With the normalization $\frac{1}{kN}$, this gives
$$
\widehat{W^{(k)}}(r,v)=
\begin{cases}
k\widehat W(\frac{r}{k},\frac{v}{k}),&k\mid r\text{ and }k\mid v,\\
0,&\text{otherwise}.
\end{cases}
$$
Thus both Fourier norms are multiplied by $k$, proving the upper bound.

As an abstract matrix, $W^{(k)}$ is permutation-similar to $W\otimes J_k$,
where $J_k$ is the all-ones matrix. The eigenvalues of $J_k$ are $k$ and
$k-1$ zeros. Hence
$$
\mathcal E(W^{(k)})=k\mathcal E(W),
  \|W^{(k)}\|_2=k\|W\|_2.
$$
When the original graph attains the energy bound, applying that bound to
the blow-up gives the reverse inequality.
\end{proof}

A hardness reduction needs a lower bound valid for every labeling of the constructed graph. The blow-up proposition supplies one useful labeling, but by itself it does not control all the others. A gadget replacing an edge of weight $M$ changes the cyclic group and introduces additional vertex permutations. Controlling those permutations is the remaining difficulty.

\begin{samepage}
\begin{question}\label{q:simple}
Is computing $\operatorname{FR}_{\min}(G)$ NP-hard for simple unweighted
graphs? Is additive approximation to some inverse-polynomial accuracy
already NP-hard on this class?
\end{question}
\end{samepage}

\section{Further questions}

The proof has used three elementary facts in succession. A large star fixes the Fourier signs. Conjugation identifies the imaginary part with a reflection defect. Finally, the discrete edge weights keep a nonzero defect large enough for a polynomial-time computation to detect. These connections suggest several further questions.

The constant-degree theorem suggests a structural question. On which graph classes can the defect $a(B_H)$ be computed efficiently? Even when the general problem is difficult, special classes may allow the pairing to be found from their decomposition or degree structure. The explicit connection with the leading Fourier excess may help transfer information between these two problems.

For unequal degrees, Lemma~\ref{lem:axis-energy} identifies the exact amount of imaginary energy on the axes. A more detailed stability theory could retain both $q_\tau(B_H)$ and $\|d-P_\tau d\|_2^2$ instead of bounding the latter crudely. The distinction is useful because the Fourier entries on and off the axes have different sizes even when $M$ is large.

The simple-graph problem asks for a further idea. The seven-vertex example shows that imposing reality can remove every minimizer, while the blow-up calculation leaves an uncontrolled minimization over new labels. A useful unweighted construction would have to address these features directly. It is also natural to ask whether the edit parameter $\eta(G)$ can give
bounds for the full edge complexity under additional graph hypotheses.

Finally, the reflection identity applies to every real matrix, and the
cone argument extends to signed source weights. Let
$$
s=\sum_{u<v}(B_H)_{uv},
  L=\sum_{u<v}|(B_H)_{uv}|,
$$
and assume $M>L$. Then $|D_{rv}|\le\frac{2L}{N}$ gives the same sign pattern. The signed sum still equals $\frac{4Mn^2-4s}{N}$, so the equality
characterization is unchanged. The stability lower bound remains
$\frac{a(B_H)}{4M}$, and its upper bound becomes $\frac{Na(B_H)}{4(M-L)}$. In this setting, averaging gives the closest
invariant signed symmetric zero-diagonal matrix. Nonnegative weights were used throughout the main discussion to keep the graph interpretation
direct and to include the integer instances needed for hardness.

\appendix
\section{A rational certificate for the seven-vertex example}
\label{app:certificate}

This appendix gives all the arithmetic needed for the five interval
inclusions in Proposition~\ref{prop:counterexample}. The proof of the
classification of real labelings is in that proposition. The computation
below only evaluates the four representatives and the one competing labeling. It uses rational numbers and integer square roots throughout.

Put $R=10^{20}$ and $\epsilon=10^{-18}$. We first obtain an approximation to each seventh root of unity whose real and imaginary coordinates have error less than $\epsilon$. Machin's identity is
$$
\pi=16\arctan\frac{1}{5}-4\arctan\frac{1}{239}.
$$
The identity follows, for example, from the tangent addition formula:
if $a=\arctan\frac{1}{5}$, then $\tan(2a)=\frac{5}{12}$ and
$\tan(4a)=\frac{120}{119}$, so
$\tan(4a-\arctan\frac{1}{239})=1$. The angle lies in
$(0,\frac{\pi}{2})$, which determines it as $\frac{\pi}{4}$.

The first thirty terms of the alternating series for each arctangent,
together with the next term, enclose its value between two rational
numbers. They give rational bounds $p_-\le\pi\le p_+$. The certificate
checks that, for $p_0=\frac{\lfloor Rp_-\rfloor}{R}$,
$$
0\le\pi-p_0\le\frac{2}{R}.
$$
For $-3\le k\le3$, set $x_k=-\frac{2p_0k}{7}$. Then $|x_k|<3$ and its
distance from $-\frac{2\pi k}{7}$ is at most $\frac{12}{7R}$. Taylor
polynomials for cosine through degree $40$ and sine through degree $41$
have errors at most
$$
\frac{|x_k|^{41}}{41!}
 \text{ and } 
\frac{|x_k|^{42}}{42!},
$$
respectively. Sine and cosine are both Lipschitz with constant one.
Rounding each polynomial value down to a multiple of $\frac{1}{R}$
therefore gives coordinate errors bounded by the Taylor remainder plus
$\frac{12}{7R}+\frac{1}{R}$. The code verifies by rational comparisons
that both bounds are smaller than $\epsilon$.

Every graph being evaluated has twenty-six ordered edges. Replacing the roots in one Fourier coefficient by these rational approximations changes each coordinate by at most $\frac{26\epsilon}{7}$. The corresponding modulus changes by at most $\frac{52\epsilon}{7}$, using the sum of the two coordinate errors as a bound for their Euclidean norm. Summing over forty-nine entries gives a total root-approximation error at most $364\epsilon$.

An approximate coefficient has the form $\frac{a+ib}{7R}$ for integers $a,b$. If $h=\lfloor\sqrt{a^2+b^2}\rfloor$, its modulus lies between $\frac{h}{7R}$ and $\frac{h+1}{7R}$. Thus, integer square roots also give rational bounds for the sum of the approximate moduli. Enlarging them by $364\epsilon$ encloses the true Fourier norm. The program below checks that this entire enclosure lies strictly inside each printed interval. No floating-point tolerance is used.

The code is complete and requires only Python 3.8 or later and its standard library. Saving the following text as a Python file and running it prints the five certified intervals and a final success message.

\begingroup
\small
\begin{verbatim}
from fractions import Fraction as Q
from itertools import combinations
from math import factorial, isqrt

def atan_bounds(b):
    x = Q(1, b)
    total = sum(((-1)**j * x**(2*j + 1) / (2*j + 1)
                 for j in range(30)), Q(0))
    following = x**61 / 61
    return total, total + following

a, b = atan_bounds(5)
c, d = atan_bounds(239)
pi_lower, pi_upper = 16*a - 4*d, 16*b - 4*c
R = 10**20
eps = Q(1, 10**18)
p0 = Q((pi_lower * R).__floor__(), R)
assert p0 <= pi_lower <= pi_upper <= p0 + Q(2, R)
roots = []
for j in range(7):
    k = j if j <= 3 else j - 7
    x = -2*p0*k/7
    assert abs(x) < 3
    cosine = sum(((-1)**h * x**(2*h) / factorial(2*h)
                  for h in range(21)), Q(0))
    sine = sum(((-1)**h * x**(2*h + 1)
                / factorial(2*h + 1)
                for h in range(21)), Q(0))
    common_error = Q(12, 7*R) + Q(1, R)
    assert abs(x)**41 / factorial(41) + common_error < eps
    assert abs(x)**42 / factorial(42) + common_error < eps
    roots.append(((cosine * R).__floor__(),
                  (sine * R).__floor__()))

def graph(first, second, bridge):
    edges = [(0, j) for j in range(1, 7)]
    edges += list(combinations(first, 2))
    edges += list(combinations(second, 2))
    edges += [bridge]
    edges = {tuple(sorted(e)) for e in edges}
    assert len(edges) == 13
    return edges

def norm_interval(edges):
    ordered = list(edges) + [(v, u) for u, v in edges]
    lower_sum = 0
    for r in range(7):
        for s in range(7):
            counts = [0] * 7
            for u, v in ordered:
                counts[(r*u + s*v) % 7] += 1
            a = sum(counts[j]*roots[j][0] for j in range(7))
            b = sum(counts[j]*roots[j][1] for j in range(7))
            lower_sum += isqrt(a*a + b*b)
    error = 364*eps
    return (Q(lower_sum, 7*R) - error,
            Q(lower_sum + 49, 7*R) + error)

def reflected(edges):
    return {tuple(sorted(((-u) % 7, (-v) % 7)))
            for u, v in edges}

def check(edges, left, right, real):
    assert (edges == reflected(edges)) == real
    lower, upper = norm_interval(edges)
    assert Q(left) < lower <= upper < Q(right)
    if real:
        assert Q(left) > 22
    else:
        assert Q(right) < 22
    print(left, '< norm <', right)

cases = [
    ((1, 2, 3), '22.481383', '22.481384'),
    ((1, 2, 4), '25.714285', '25.714286'),
    ((1, 3, 5), '22.249490', '22.249491'),
    ((1, 4, 5), '22.057990', '22.057991'),
]
for first, left, right in cases:
    second = tuple((-v) % 7 for v in first)
    check(graph(first, second, (1, 6)), left, right, True)
check(graph((1, 2, 3), (4, 5, 6), (1, 4)),
      '21.953290', '21.953291', False)
print('All five rational interval certificates verified.')
\end{verbatim}
\endgroup

\end{document}